\renewcommand{\PSym}{\Sym^{\!+}}
\newcommand{\Wiso}{W_{\mathrm{iso}}}
\newcommand{\etss}{\mathrm{(E}\text{-}\mathrm{TSS)}}
\newcommand{\wetss}{\mathrm{(WE}\text{-}\mathrm{TSS)}}
\newcommand{\bep}{\mathrm{(BE^+)}}
\newcommand{\bicoax}{\mathrm{(bi}\text{-}\mathrm{coax)}}
\newcommand{\semi}{\mathrm{(semi)}}
\newcommand{\invert}{\mathrm{(invert)}}
\renewcommand{\Rp}{\R^+}
\tikzset{
    state/.style={
           rectangle,
           rounded corners,
           draw=black, very thick,
           minimum height=2em,
           inner sep=6pt,
           text centered,
           },
}
\begin{document}
\title{\vspace{-2cm}Do we need Truesdell's empirical inequalities? \\ On the coaxiality of stress and stretch.}
\date{\today}
\author{%
	Christian Thiel\thanks{
		Christian Thiel,  \ \ Lehrstuhl f\"{u}r Nichtlineare Analysis und Modellierung, Fakult\"{a}t f\"{u}r Mathematik, Universit\"{a}t Duisburg-Essen,  Thea-Leymann Str. 9, 45127 Essen, Germany, email: christian.thiel@uni-due.de}
		\quad and\quad Jendrik Voss\thanks{%
		Jendrik Voss,\quad Lehrstuhl f\"{u}r Nichtlineare Analysis und Modellierung, Fakult\"{a}t f\"{u}r Mathematik, Universit\"{a}t Duisburg-Essen, Thea-Leymann Str. 9, 45127 Essen, Germany; email: max.voss@.uni-due.de}
		\quad and\quad Robert J.\ Martin\thanks{%
		Robert J.\ Martin,\quad Lehrstuhl f\"{u}r Nichtlineare Analysis und Modellierung, Fakult\"{a}t f\"{u}r Mathematik, Universit\"{a}t Duisburg-Essen, Thea-Leymann Str. 9, 45127 Essen, Germany; email: robert.martin@uni-due.de}
		\quad and\quad Patrizio Neff\thanks{%
		Corresponding author: Patrizio Neff,\quad Head of Lehrstuhl f\"{u}r Nichtlineare Analysis und Modellierung, Fakult\"{a}t f\"{u}r	Mathematik, Universit\"{a}t Duisburg-Essen, Thea-Leymann Str. 9, 45127 Essen, Germany, email: patrizio.neff@uni-due.de}
}
\maketitle
\vspace{-1em}
\begin{abstract}
	Truesdell's empirical inequalities are considered essential in various fields of nonlinear elasticity. However, they are often used merely as a sufficient criterion for semi-invertibility of the isotropic stress strain-relation, even though weaker and much less restricting constitutive requirements like the strict Baker-Ericksen inequalities are available for this purpose. We elaborate the relations between such constitutive conditions, including a weakened version of the empirical inequalities, and their connection to bi-coaxiality and related matrix properties. In particular, we discuss a number of issues arising from the seemingly ubiquitous use of the phrase \enquote{$X,Y$ have the same eigenvectors} when referring to commuting symmetric tensors $X,Y$.
\end{abstract}
{\textbf{Key words:} nonlinear elasticity, finite isotropic elasticity, constitutive inequalities, stress tensors, Cauchy stress, isotropic nonlinear elasticity, isotropic tensor functions, constitutive law, hyperelasticity, Cauchy-elasticity, empirical inequalities, adscititious inequality}
\\[.65em]
\noindent {\bf AMS 2010 subject classification: 74B20, 74A20, 74A10}

\thispagestyle{empty}
\enlargethispage{4em}
{\parskip=0.5mm \tableofcontents}
\bigskip

%
%
%
%
\section{Introduction}

A basic principle in linear algebra states that two symmetric matrices $X,Y\in\Symn$ commute if and only if they have a common basis of eigenvectors, i.e.\ if and only if there exists a basis $b_1,\dotsc,b_n\in\R^n$ such that for each $i\in\{1,\dotsc,n\}$, $b_i$ is an eigenvector of both $X$ and $Y$. This property is used quite often in nonlinear elasticity theory, where certain pairs of stress and stretch or strain always commute in the isotropic case.

However, it is also quite common to encounter the claim that $X$ and $Y$ \enquote{have the same eigenvectors} if they commute \cite{ortiz2004variational,thompson2008perspectives,mihai2011positive}. While a charitable reading of this phrase (or similar statements \cite{bertram2008elasticity,vallee2008dual,basar2013nonlinear,agn_neff2014rediscovering}) allows for an interpretation in the above (correct) sense, one must be careful not to infer from it additional properties of the pair $X,Y$ which might not hold in general. For example, it is \emph{not} true that for all commuting $X,Y\in\Symn$, every eigenvector of $X$ is an eigenvector of $Y$ (or vice versa). In isotropic nonlinear elasticity, it is therefore important to carefully distinguish between the general case of commuting matrices and the properties of \emph{coaxiality} and \emph{bi-coaxiality}, the latter of which is closely connected to the notion of \emph{semi-invertibility} \cite{reiner1948elasticity,truesdell1952}, which was shown by Truesdell \cite{truesdell1975inequalities} to follow from the so-called \emph{empirical inequalities}.

\subsection{Truesdell's empirical inequalities}
After the second world war had ravaged Europe, it was Clifford Truesdell who started a great campaign \cite{truesdell1952,truesdell60,truesdell65} to revive the science of nonlinear solid mechanics and to put it on firm mathematical grounds. In the course of this endeavor, Truesdell was faced with what he called the \enquote{Hauptproblem der endlichen Elastizitätstheorie} \cite{truesdell1956}, that is to find reasonable constitutive restrictions on the stress response for the nonlinear modeling of rubber.

In isotropic nonlinear hyperelasticity, every energy potential $W\col F\mapsto W(F)$ can be expressed as a function of the three principal invariants
\begin{equation*}
	I_1=\tr(B)=\norm{F}^2\,,\;\;\; I_2=\frac{1}{2}[(\tr\,B)^2-\,\tr(B^2)]=\tr(\Cof B)=\norm{\Cof F}^2\,,\;\;\; I_3=\det(B)=(\det F)^2
\end{equation*}
of the left Cauchy-Green tensor $B=FF^T$ corresponding to the deformation gradient $F$. In the incompressible case $I_3=1=\det F$, this representation can be further simplified to $W(F)=\widetilde W(I_1,I_2)$. In a 1952 article \cite{truesdell1952}, Truesdell postulated the conditions
\begin{equation}
\label{eq:truesdellFirst}
	\frac{\partial \widetilde W}{\partial I_1}(I_1,I_2)>0\,,\qquad\frac{\partial \widetilde W}{\partial I_2}(I_1,I_2)\geq 0\,,
\end{equation}
which are reasonable in that they require the energy to increase monotonically with increasing difference of average lengths and areas to those in the reference state, respectively.\footnote{For a more detailed interpretation, see Remark \ref{remark:weakEmpiricalInterpretation} and consider eq.\ \eqref{eq:averageLengthConstrainedMin} for the special case $d=1$.} Experimental evidence for rubber did not seem to contradict these conditions.

Later, Truesdell became rather passionate about abandoning the hyperelastic framework, instead focusing on the more general representation \cite{richter1948,rivlinEricksen1997relations}
\begin{align}
	\widehat\sigma(B) = \beta_0\.\id + \beta_1\.B + \beta_{-1}\.B^{-1}
\end{align}
for the stress response $FF^T = B \mapsto \sigmahat(B)=\sigma$ in isotropic nonlinear Cauchy elasticity; here, $\beta_i$ are scalar-valued functions depending on the matrix invariants of $B$ and $\sigma$ is the Cauchy stress tensor. In the hyperelastic case, the coefficient functions $\beta_i$ are given by
\begin{equation}
	\beta_0=\frac{2}{\sqrt{I_3}}\left(I_2\.\frac{\partial W}{\partial I_2}+I_3\.\frac{\partial W}{\partial I_3}\right)\,,\qquad\beta_1=\frac{2}{\sqrt{I_3}}\.\frac{\partial W}{\partial I_1}\,,\qquad\beta_{-1}=-2\sqrt{I_3}\.\frac{\partial W}{\partial I_2}\,,
\end{equation}
while under the constraint of incompressibility, the hydrostatic pressure (given by $\beta_0$) remains undetermined and requirement \eqref{eq:truesdellFirst} reduces to
\begin{align}
	\beta_1>0\,,\qquad\beta_{-1}\leq 0\,.\label{eq:truesdellSecond}
\end{align}
These are the so-called (incompressible) empirical inequalities \cite{truesdell65,truesdell1963static,beatty2001seven,aron2000some}. For the general compressible case, Truesdell strengthened the empirical inequalities to
\begin{align}
	\beta_{-1}\leq 0\,,\quad\beta_0\leq 0\,,\quad\beta_1 > 0\,,\label{eq:truesdellThird}
\end{align}
from which he deduced the (nowadays obsolete \cite{rivlin1997principles}) ordered-force inequality \cite{truesdell65} as well as the Baker-Ericksen inequalities 
\cite{bakerEri54}.

Truesdell's Hauptproblem remains unsolved to this day, although major advances have been made in the hyperelastic framework, especially with the seminal introduction of the notion of polyconvexity by Sir John Ball in 1977 \cite{ball1976convexity,ball1977constitutive} and the weaker requirement of rank-one convexity \cite{ball1976convexity}.

In various papers \cite{destrade2012,batra1976deformation,mihai2013numerical}, Truesdell's empirical inequalities are used to ensure the semi-invertibility of the isotropic stress-strain relation $B\mapsto\sigmahat(B)$, i.e.\ the representability $B = \psi_0(B)\.\id + \psi_1(B)\.\sigmahat + \psi_2(B)\.\sigmahat^2$ (see Definition \ref{definition:semiInvertibility}).\footnote{Other uses of the empirical inequalities can be found, for example, in \cite{liu2012note} or \cite{pucci2015determination}, cf.\ \cite{agn_voss2018more}.} For example, Destrade et al.\ \cite{destrade2012} utilize the semi-invertibility to show that the deformation corresponding to a simple Cauchy shear stress is not a simple shear. Similarly, Batra \cite[p.\,110]{batra1976deformation} shows that for a law of elasticity that satisfies the empirical inequalities, the deformation corresponding to a simple tensile load $\sigma$ must be a simple extension.

However, the empirical inequalities, also named \textbf{e}mpirical \textbf{t}rue \textbf{s}tress \textbf{s}tretch $\etss$, are by no means necessary to obtain these results; in fact, they are too strict to include commonly employed hyperelastic energy functions and even lead to physically unreasonable material behavior \cite{dunn1984elastic,saravanan2011adequacy}. On the other hand, semi-invertibility is equivalent to the bi-coaxility of the mapping $B\mapsto\sigmahat(B)$  \cite{blume1994elastic,blume1992form}, which is, for example, implied by the (considerably weaker) strict Baker-Ericksen inequalities \cite{bakerEri54}.

\subsection{Overview}
In the following, we will discuss the connection between commuting matrices, coaxiality, bi-coaxiality and semi-invertibility as well as sufficient criteria for these properties in isotropic nonlinear elasticity, including the strict Baker-Ericksen inequalities and the empirical inequalities. Similar considerations can be found in an earlier article by Dunn \cite{dunn1984elastic}.

In order to provide a more accessible sufficient criterion for semi-invertibility, we also introduce the weak empirical inequalities $\wetss$ which are stricter than $\bep$ but not as restrictive as $\etss$ and are, in fact, satisfied by a large number of classical elastic energy functions. As an example, we show that $\wetss$ are fulfilled by the classical quadratic Hencky energy, which does not satisfy the (full) empirical inequalities.

In order for novel results in nonlinear elasticity to be applicable to the largest possible number of material models, one should make the least restricting assumptions which are still sufficient for the result to hold. Our aim is to provide a variety of conditions which are strong enough to allow for common deductions from the stress tensor to the form of the stretch tensor (which is often required), but weak enough to be satisfied by a large variety of interesting constitutive models.

%
%
%
\section{Isotropy and Coaxiality}
\label{section:basicLinearAlgebra}
In the following, we will reiterate some very basic, well-known results from linear algebra in order to highlight the exact differences between the notions of simultaneous diagonalizability, coaxiality and bi-coaxiality. The interconnections between these properties are visualized in Figure \ref{fig:matrixRelation}.
\begin{proposition}\label{proposition:commutingEqualsSimultan}
	Let $A\,,B\in\Sym(n)$. Then the following are equivalent:
	\begin{itemize}
		\item $A$ and $B$ \emph{commute}, i.e.\ $A\.B=B\.A$.
		\item $A$ and $B$ are \emph{simultaneously (orthogonally) diagonalizable}, i.e.\ there exists $Q\in \On$ with
	\[
		A=Q^T\diag(a_1,\cdots,a_n)\.Q\qquad\text{and}\qquad B=Q^T\diag(b_1,\cdots,b_n)\.Q\,.
	\]
	\end{itemize}
\end{proposition}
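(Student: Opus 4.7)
The plan is to establish the two implications separately, with the non-trivial direction handled via the spectral theorem and an invariant-subspace argument.

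For the easy direction ($\Leftarrow$), I would simply compute: if $A = Q^T D_A Q$ and $B = Q^T D_B Q$ with $D_A,D_B$ diagonal and $Q\in\On$, then $A\.B = Q^T D_A D_B Q = Q^T D_B D_A Q = B\.A$, since diagonal matrices obviously commute. This is a one-line verification.

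For the substantive direction ($\Rightarrow$), suppose $A\.B = B\.A$. First I would invoke the spectral theorem for $A\in\Sym(n)$ to obtain the orthogonal eigenspace decomposition $\R^n = \bigoplus_{\lambda\in\spec(A)} E_\lambda(A)$, where $E_\lambda(A) = \ker(A-\lambda\.\id)$. The key observation is that each eigenspace $E_\lambda(A)$ is $B$-invariant: if $v\in E_\lambda(A)$, then $A(Bv) = B(Av) = B(\lambda v) = \lambda(Bv)$, so $Bv\in E_\lambda(A)$. Moreover, $B$ restricted to $E_\lambda(A)$ remains symmetric with respect to the Euclidean inner product (inherited from $\R^n$), since the eigenspace is an orthogonal subspace and $B$ is globally symmetric.

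Applying the spectral theorem once more, this time to each restriction $B|_{E_\lambda(A)}$, yields an orthonormal basis of $E_\lambda(A)$ consisting of eigenvectors of $B$. Each such basis vector is trivially an eigenvector of $A$ with eigenvalue $\lambda$. Concatenating these bases across all eigenspaces of $A$, and using the orthogonality of distinct eigenspaces of a symmetric operator, produces a global orthonormal basis $b_1,\dotsc,b_n$ of $\R^n$ simultaneously diagonalizing $A$ and $B$. Assembling these vectors as the rows of $Q\in\On$ gives the desired representation.

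The main (and really only) obstacle is the invariance argument that moves $B$ into each eigenspace of $A$; everything else is bookkeeping via the spectral theorem. It is worth emphasizing, in the spirit of the discussion preceding the proposition, that the common orthonormal basis is constructed only after the second application of the spectral theorem inside each eigenspace: when $A$ has a repeated eigenvalue, a generic eigenvector of $A$ in that eigenspace need not be an eigenvector of $B$, which is precisely the subtlety motivating the careful distinction between commutativity and the (stronger) coaxiality notions introduced later.
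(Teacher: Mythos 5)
Your argument is correct and complete: the backward direction is the trivial computation, and the forward direction correctly combines the spectral theorem for $A$, the $B$-invariance of each eigenspace $E_\lambda(A)$ (via $A(Bv)=B(Av)=\lambda\.Bv$), the symmetry of the restriction $B|_{E_\lambda(A)}$, and a second application of the spectral theorem inside each eigenspace. The paper itself gives no proof but merely cites \cite[Theorem 1.3.12]{horn1990matrix}, and your argument is essentially the standard one found there, so there is nothing to reconcile; your closing remark that a common eigenbasis only emerges after refining within each eigenspace of $A$ is exactly the subtlety the paper's surrounding discussion of coaxiality is meant to highlight.
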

\begin{proof}
	See, for example, \cite[Theorem 1.3.12]{horn1990matrix}.
\end{proof}
\begin{definition}\label{def:coaxial}
	Let $A,B\in\Sym(n)$. Then $A$ is called \emph{coaxial to} $B$ if each eigenvector of $A$ is an eigenvector of $B$. 
\end{definition}

Note carefully that coaxiality is \emph{not} a symmetric relation; for example, let $A=\id=\diag(1,1)$ and $B=\diag(1,0)$. Then every eigenvector of $B$ is an eigenvector of $A$, while $(1,1)^T$ is an eigenvector of $A$ but not of $B$, which implies that although $A$ is coaxial to $B$, $B$ is not coaxial to $A$.

\begin{definition}
	Let $A,B\in\Sym(n)$. Then $A$ and $B$ are called \emph{bi-coaxial} if $A$ is coaxial to $B$ and $B$ is coaxial to $A$.
\end{definition}

Note also that two symmetric matrices $A,B\in\Symn$ can commute even if neither $A$ is coaxial to $B$ nor $B$ is coaxial to $A$; for example, consider the case $A=\diag(1,1,0)$ and $B=\diag(0,1,1)$.
\begin{lemma}\label{lemma:coaxialToCommuting}
	Let $A,B\in\Sym(n)$ such that $A$ is coaxial to $B$. Then $A$ and $B$ commute.
\end{lemma}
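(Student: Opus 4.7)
The plan is to leverage the spectral theorem for the symmetric matrix $A$ and then invoke coaxiality pointwise on an eigenbasis. Specifically, since $A \in \Sym(n)$ is symmetric, the spectral theorem provides an orthonormal basis $v_1, \dotsc, v_n \in \R^n$ consisting of eigenvectors of $A$, with $A v_i = a_i v_i$ for suitable $a_i \in \R$.

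By the hypothesis that $A$ is coaxial to $B$ (in the sense of Definition \ref{def:coaxial}), each $v_i$ is also an eigenvector of $B$, so that $B v_i = b_i v_i$ for some $b_i \in \R$. At this point there are two equivalent ways to conclude; I would pick the more transparent direct computation: for every $i$,
\[
    A B\, v_i \;=\; A (b_i v_i) \;=\; b_i a_i v_i \;=\; a_i b_i v_i \;=\; B(a_i v_i) \;=\; B A\, v_i,
\]
so the linear maps $AB$ and $BA$ agree on a basis of $\R^n$ and must therefore coincide.

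Alternatively, one could assemble the orthogonal matrix $Q \in \On$ whose rows are $v_1^T, \dotsc, v_n^T$; then the above calculation shows that $Q A Q^T = \diag(a_1, \dotsc, a_n)$ and $Q B Q^T = \diag(b_1, \dotsc, b_n)$ simultaneously, and commutativity follows from Proposition \ref{proposition:commutingEqualsSimultan}. There is no real obstacle here: the only subtlety worth emphasizing, in light of the discussion following Definition \ref{def:coaxial}, is that the argument crucially uses the \emph{existence} of an orthonormal eigenbasis of $A$ (guaranteed by $A \in \Sym(n)$) together with the coaxiality hypothesis applied to each vector of that basis; one must not be tempted to upgrade the asymmetric coaxiality hypothesis into a symmetric one during the proof.
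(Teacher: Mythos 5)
Your proof is correct and follows essentially the same route as the paper: take an orthonormal eigenbasis of $A$ (spectral theorem), use coaxiality to see that it is also an eigenbasis of $B$, and conclude commutativity — your \enquote{alternative} via simultaneous diagonalization is in fact exactly the paper's argument. The direct computation $AB\,v_i = BA\,v_i$ is a harmless, equally valid way to finish the same idea.
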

\begin{proof}
	$A$ is coaxial to $B$ if and only if every orthogonal eigenvector basis $Q\in \On$ of $A$ is an eigenvector basis of $B$, i.e.
	\[
		A=Q^T\diag(a_1,\cdots,a_n)\.Q\qquad\implies\qquad B=Q^T\diag(b_1,\cdots,b_n)\.Q\,.\qedhere
	\]
\end{proof}
The exact distinction between coaxiality and simultaneous diagonalizability (cf.\ Proposition \ref{proposition:commutingEqualsSimultan}) is given by the following lemma.
\begin{lemma}\label{lemma:commutingToCoaxial}
	Let $A\,,B\in\Sym(n)$ be commuting matrices with
	\[
		A=Q^T\diag(a_1,\cdots,a_n)\.Q\qquad\text{and}\qquad B=Q^T\diag(b_1,\cdots,b_n)\.Q\,.
	\]
	for $Q\in \On$. Then $A$ is coaxial to $B$ if and only if
	\begin{equation}
	\label{eq:commutingToCoaxialImplication}
		a_i=a_j\qquad\implies\qquad b_i=b_j\qquad\text{for all }\; i,j\in\{1,\cdots,n\}\,.
	\end{equation}
\end{lemma}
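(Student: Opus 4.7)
The plan is to prove both implications directly by exploiting the fact that the restrictive content of coaxiality becomes visible only at repeated eigenvalues of $A$: when $A$ has a degenerate eigenspace, every vector in it is an eigenvector of $A$, and each such vector must then also be an eigenvector of $B$. Throughout, I will write $q_1,\dots,q_n$ for the rows of $Q$ (equivalently, the columns of $Q^T$), so that $Aq_i=a_iq_i$ and $Bq_i=b_iq_i$.

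For the forward direction, assume $A$ is coaxial to $B$ and suppose $a_i=a_j$ for some $i\neq j$. Then $q_i+q_j$ is an eigenvector of $A$ for the eigenvalue $a_i=a_j$, so by coaxiality it must also be an eigenvector of $B$. Since $B(q_i+q_j)=b_iq_i+b_jq_j$ and $\{q_i,q_j\}$ is linearly independent, the only way this is a scalar multiple of $q_i+q_j$ is $b_i=b_j$, which establishes \eqref{eq:commutingToCoaxialImplication}.

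For the converse, assume the implication in \eqref{eq:commutingToCoaxialImplication} holds and let $v$ be an arbitrary eigenvector of $A$ with eigenvalue $\lambda$. Expand $v=\sum_{i=1}^n c_i\.q_i$ in the orthonormal basis $q_1,\dots,q_n$. Then
\[
    Av=\sum_{i=1}^n c_i\.a_i\.q_i=\lambda\sum_{i=1}^n c_i\.q_i
\]
forces $c_i(a_i-\lambda)=0$ for every $i$, so $c_i=0$ whenever $a_i\neq\lambda$. Hence $v$ lies in the span of those $q_i$ with $a_i=\lambda$. By the hypothesis, all the $b_i$ with $a_i=\lambda$ share a common value $\mu$, so
\[
    Bv=\sum_{i\,:\,a_i=\lambda} c_i\.b_i\.q_i=\mu\sum_{i\,:\,a_i=\lambda} c_i\.q_i=\mu\.v\,,
\]
which shows that $v$ is an eigenvector of $B$. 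Since $v$ was arbitrary, $A$ is coaxial to $B$.

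There is no real obstacle here; the only subtlety worth flagging explicitly (and the reason the lemma is interesting in the first place) is the non-uniqueness of the basis $Q$ in Proposition \ref{proposition:commutingEqualsSimultan} when $A$ has repeated eigenvalues: the diagonalizing basis provided by commutativity is \emph{some} joint eigenbasis, but coaxiality requires \emph{every} eigenvector of $A$ to be an eigenvector of $B$, which is precisely the extra constraint captured by \eqref{eq:commutingToCoaxialImplication}.
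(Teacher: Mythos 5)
Your argument is correct and follows essentially the same route as the paper's own proof: the forward direction tests coaxiality against the vectors $q_i+q_j$ (the paper's $Q^T(e_i+e_j)$) for a repeated eigenvalue of $A$ and uses linear independence to force $b_i=b_j$, while the converse expands an arbitrary eigenvector of $A$ in the joint eigenbasis, observes that only indices with $a_i=\lambda$ can contribute, and invokes \eqref{eq:commutingToCoaxialImplication} to get a common value $\mu$ for the corresponding $b_i$. There are no gaps; the two proofs differ only in notation.
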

\begin{proof}
	Let $A$ be coaxial to $B$. Since the implication \eqref{eq:commutingToCoaxialImplication} is trivial for $i=j$, consider the sum $v=e_i+e_j$ of two canonical basis vectors with $i\neq j$ and $a\colonequals a_i=a_j$. We compute
	\[
		Q^T\diag(a_1,\cdots,a_n)\.Q\.Q^Tv=Q^T\diag(a_1,\cdots,a_n)\.v=Q^T(a\.v)=a\.Q^Tv\,,
	\]
	thus $Q^Tv$ is an eigenvector of $A$ to the eigenvalue $a$. Due to the assumed coaxiality, $Q^Tv$ is an eigenvector of $B$, i.e.\ there exists $b\in\R$ with
	\[
		b\.Q^Tv = B\.b\.Q^Tv = Q^T\diag(b_1,\cdots,b_n)\.Q\.Q^Tv = Q^T\diag(b_1,\cdots,b_n)v\,,
	\]
	i.e.\ $\diag(b_1,\cdots,b_n)\.v=b\.v$. Since $v=e_i+e_j$, this equality immediately yields $b_i=b_j=b$.
	
	Now, let implication \eqref{eq:commutingToCoaxialImplication} hold for all $i,j\in\{1,\cdots,n\}$. Consider the orthogonal basis $Q\in \On$ of eigenvectors of $A$ and an arbitrary eigenvector $v\in\R^n$ of $A$ to the eigenvalue $a$, i.e.\ $A\.v=a\.v$. Then
	\[
		\diag(a_1,\cdots,a_n)\.Q\.v=Q\.A\.v=a\.Q\.v\,,
	\]
	thus $v'\colonequals Q\.v$ is an eigenvector of $\diag(a_1,\cdots,a_n)$ to the eigenvalue $a$. In order to show that $v'$ is an eigenvector of $\diag(b_1,\cdots,b_n)$, we only need to establish that $b_i=b_j$ if $v'_i\neq0$ and $v'_j\neq0$.
	The latter implies that $a_i\.v'_i=a\.v'_i$ and $a_j\.v'_j=a\.v'_j$, since $v'$ is an eigenvector of $\diag(a_1,\cdots,a_n)$ to the eigenvalue $a$, hence $a_i=a=a_j$ and thus $b_i=b_j$ due to condition \eqref{eq:commutingToCoaxialImplication}. Therefore, $v'=Q\.v$ is an eigenvector of $\diag(b_1,\cdots,b_n)$ to some eigenvalue $b$ and thus
	\[
		B\.v=Q^T\diag(b_1,\cdots,b_n)\.Q\.v=Q^T(b\.Q\.v)=b\.v\,.\qedhere
	\]
\end{proof}
\begin{corollary}\label{corollary:distinct}
	If $A\in\Symn$ has $n$ distinct (i.e.\ only simple) eigenvalues, then $A$ is coaxial to every $B\in\Symn$ which commutes with $A$. In particular, if both $A,B\in\Symn$ have only simple eigenvalues and $AB=BA$, then $A$ and $B$ are bi-coaxial.
\end{corollary}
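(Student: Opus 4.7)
The plan is to reduce the statement to a direct application of Lemma \ref{lemma:commutingToCoaxial}. Since $A \in \Sym(n)$ has $n$ distinct eigenvalues, for any $B \in \Sym(n)$ that commutes with $A$, Proposition \ref{proposition:commutingEqualsSimultan} yields an orthogonal $Q \in \On$ and real numbers $a_1,\dotsc,a_n$ and $b_1,\dotsc,b_n$ such that
\[
    A = Q^T \diag(a_1,\dotsc,a_n)\.Q \qquad\text{and}\qquad B = Q^T \diag(b_1,\dotsc,b_n)\.Q\,.
\]
Because the $a_i$ are pairwise distinct, the premise $a_i = a_j$ in implication \eqref{eq:commutingToCoaxialImplication} is satisfied only when $i = j$, in which case the conclusion $b_i = b_j$ is immediate. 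Hence \eqref{eq:commutingToCoaxialImplication} holds trivially, and Lemma \ref{lemma:commutingToCoaxial} delivers that $A$ is coaxial to $B$.

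For the second assertion, the same argument applies with the roles of $A$ and $B$ interchanged: since $B$ also has $n$ simple eigenvalues and $BA = AB$, Lemma \ref{lemma:commutingToCoaxial} gives that $B$ is coaxial to $A$ as well. Combining the two directions yields bi-coaxiality by definition.

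I do not anticipate any substantive obstacle here, as the corollary is essentially a specialization of Lemma \ref{lemma:commutingToCoaxial} to the generic case where the multiplicity condition \eqref{eq:commutingToCoaxialImplication} is vacuous. The only point worth stating carefully is the use of the simultaneous diagonalization (Proposition \ref{proposition:commutingEqualsSimultan}) to ensure that we may choose the same $Q$ for both matrices, so that Lemma \ref{lemma:commutingToCoaxial} is literally applicable.
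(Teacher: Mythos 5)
Your proof is correct and follows exactly the route the paper intends: the corollary is stated without proof precisely because it is the specialization of Lemma \ref{lemma:commutingToCoaxial} (via Proposition \ref{proposition:commutingEqualsSimultan}) to the case where condition \eqref{eq:commutingToCoaxialImplication} is vacuous, and the bi-coaxiality claim follows by applying the same argument symmetrically. No issues.
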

\begin{corollary}
\label{cor:correspondingEigenvalues}
	Let $a_1,\dotsc,a_n\in\R$ denote the eigenvalues (with multiplicity) of $A\in\Symn$. If $A$ is coaxial to $B\in\Symn$, then there exist uniquely determined $b_1,\dotsc,b_n\in\R$ such that each eigenvector of $A$ to $a_i$ is an eigenvector of $B$ to $b_i$. We call $b_1,\dotsc,b_n$ the \emph{corresponding} eigenvalues to $a_1,\dotsc,a_n$.
\end{corollary}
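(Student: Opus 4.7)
The plan is to reduce the corollary to the two lemmas just established, with Lemma \ref{lemma:commutingToCoaxial} doing the key work on multiplicities. First I would observe that coaxiality of $A$ to $B$ gives $AB=BA$ via Lemma \ref{lemma:coaxialToCommuting}, so by Proposition \ref{proposition:commutingEqualsSimultan} there exists $Q\in\On$ with
\[
	A=Q^T\diag(a_1,\dotsc,a_n)\.Q\qquad\text{and}\qquad B=Q^T\diag(b_1,\dotsc,b_n)\.Q\,,
\]
where the $a_i$ are the given eigenvalues of $A$. This produces natural candidates $b_1,\dotsc,b_n$; it remains only to show that these candidates satisfy the stated property on \emph{every} eigenvector of $A$ (not just on the columns of $Q^T$), and that they are uniquely determined.

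For the existence part, fix an index $i$ and consider an arbitrary eigenvector $v\in\R^n$ of $A$ to $a_i$. Decomposing $Q\.v$ along the standard basis, only indices $j$ with $a_j=a_i$ can contribute (since $Q\.v$ is an eigenvector of $\diag(a_1,\dotsc,a_n)$ to $a_i$). By Lemma \ref{lemma:commutingToCoaxial}, applied in the direction using the coaxiality of $A$ to $B$, each such index satisfies $b_j=b_i$. Hence $\diag(b_1,\dotsc,b_n)\.Q\.v=b_i\.Q\.v$, and conjugating back by $Q^T$ yields $B\.v=b_i\.v$, so $v$ is indeed an eigenvector of $B$ to $b_i$.

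For uniqueness, suppose $\widetilde b_1,\dotsc,\widetilde b_n$ is another admissible choice. Pick any nonzero eigenvector $v_i$ of $A$ to $a_i$ (for instance, the $i$-th column of $Q^T$). By assumption $B\.v_i=b_i\.v_i$ and $B\.v_i=\widetilde b_i\.v_i$, so $(b_i-\widetilde b_i)\.v_i=0$ and hence $b_i=\widetilde b_i$.

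The only real subtlety, and thus the main obstacle, is making sure the assignment $a_i\mapsto b_i$ is well-defined across repeated eigenvalues: if $a_i=a_j$ but $b_i\neq b_j$, one could not consistently speak of ``\emph{the} corresponding eigenvalue to $a_i$.'' This is exactly the content of implication \eqref{eq:commutingToCoaxialImplication} in Lemma \ref{lemma:commutingToCoaxial}, so once that lemma is invoked the argument reduces to linearity inside each eigenspace $E_A(a_i)$ of $A$, which is then automatically contained in the corresponding eigenspace of $B$.
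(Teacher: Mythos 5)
Your proof is correct and follows exactly the route the paper intends: the corollary is stated without an explicit proof, but the surrounding remark and the preceding results (Lemma \ref{lemma:coaxialToCommuting}, Proposition \ref{proposition:commutingEqualsSimultan}, and the implication \eqref{eq:commutingToCoaxialImplication} of Lemma \ref{lemma:commutingToCoaxial}) are precisely the ingredients you assemble. Your explicit handling of the well-definedness across repeated eigenvalues and of arbitrary (not just column-of-$Q^T$) eigenvectors is a careful filling-in of what the paper leaves implicit.
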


Since the values $a_i$ are uniquely determined by $A$ up to ordering, the \enquote{correspondence} of the eigenvalues $b_i$ of $B$ can simply be interpreted as a compatibility requirement on the ordering of the values $b_i$. Note that if $A$ and $B$ are simultaneously diagonalized, then the ordering of the diagonal entries are consistent with this correspondence.

\subsection{Isotropic tensor functions}
In nonlinear elasticity theory, the stress-strain relation is often assumed to be an \emph{isotropic} mapping from a stretch or strain tensor to a (work conjugate) stress tensor. In the following, we will discuss the relation between isotropic mappings and (bi-)coaxiality.

\begin{definition}
	Let $\Phi\col M\to\Symn$ be a tensor function on an isotropic set\footnote{%
		A set $M\subset\Symn$ is called isotropic if $Q^TX\.Q\in M$ for all $X\in M$ and $Q\in\On$.%
	}
	$M\subset\Symn$. Then
	\begin{itemize}
		\item[i)]
			$\Phi$ is called \emph{isotropic} if $\Phi(Q^TX\.Q)=Q^T\Phi(X)\.Q$ for all $X\in M$ and $Q\in \On$,
		\item[ii)]
			$\Phi$ is called \emph{coaxial} if $X$ is coaxial to $\Phi(X)$ for all $X\in M$,
		\item[iii)]
			$\Phi$ is called \emph{bi-coaxial} if $X$ and $\Phi(X)$ are bi-coaxial for all $X\in M$.
	\end{itemize}
\end{definition}
\begin{lemma}[{\cite[Theorem 4.2.4]{ogden1997non}}]\label{lemma:isotropToCoaxial}
	Every isotropic tensor function $\Phi\col M\to\Sym(n)$ is coaxial.
\end{lemma}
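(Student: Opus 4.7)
The goal is to show that for every $X \in M$ and every eigenvector $v$ of $X$, the vector $v$ is also an eigenvector of $\Phi(X)$. My plan is to exploit the isotropy of $\Phi$ by finding, for a given eigenvector $v$ of $X$, an orthogonal transformation $Q$ that fixes $X$ under the conjugation $X \mapsto Q^T X Q$ while having $\mathrm{span}(v)$ as a distinguished simple eigenspace. Then isotropy forces $Q^T \Phi(X) Q = \Phi(X)$, i.e.\ $Q$ commutes with $\Phi(X)$, and so $\Phi(X)$ must preserve the one-dimensional eigenspaces of $Q$, which pins $v$ down as an eigenvector of $\Phi(X)$.

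Concretely, given a unit eigenvector $v$ of $X$, I would use the Householder-type reflection across the line $\mathrm{span}(v)$, namely $Q = 2\.v v^T - \id$, which satisfies $Q\in\On$, $Q\.v = v$, and $Q\.w = -w$ for every $w\perp v$. The key calculation is then to verify that $Q^T X Q = X$. A clean way is to expand $Q^T X Q = (2vv^T - \id)\.X\.(2vv^T - \id)$ and use $Xv = \lambda v$ together with $v^T X = \lambda v^T$ (by symmetry of $X$); the cross terms produce $\pm 2\lambda v v^T$ that cancel, leaving $X$. Alternatively, one can pick an orthonormal eigenbasis of $X$ containing $v$ (choosing $v$ inside the $\lambda$-eigenspace if that eigenspace has multiplicity), observe that each basis vector either coincides with $v$ or lies in $v^\perp$, and check that $Q^T X Q$ acts identically to $X$ on each basis vector.

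With $Q^T X Q = X$ established, isotropy yields
\[
 \Phi(X) \;=\; \Phi(Q^T X Q) \;=\; Q^T \Phi(X)\. Q,
\]
so $Q$ and $\Phi(X)$ commute. Since the $(+1)$-eigenspace of $Q$ is exactly $\mathrm{span}(v)$ (a simple eigenspace), and $\Phi(X)$ leaves each eigenspace of $Q$ invariant (by Proposition \ref{proposition:commutingEqualsSimultan} applied to the commuting symmetric pair $Q,\Phi(X)$, or simply because $\Phi(X)v \in \ker(Q - \id) = \mathrm{span}(v)$), we conclude $\Phi(X)\. v \in \mathrm{span}(v)$, i.e.\ $v$ is an eigenvector of $\Phi(X)$.

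The main subtlety I would watch for is the multiplicity issue: $v$ need not span a simple eigenspace of $X$, so one cannot argue merely by distinct eigenvalues. The reflection trick sidesteps this because $Q^T X Q = X$ holds regardless of the multiplicity of $\lambda$, while $Q$ itself always has a simple $(+1)$-eigenspace $\mathrm{span}(v)$; thus the isotropy relation extracts precisely the information we want about $v$ alone, without needing to resolve the rest of the eigenstructure of $X$.
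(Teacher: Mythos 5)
Your proof is correct and follows essentially the same route as the paper: both use the reflection whose $\pm1$-eigenspace structure singles out $\mathrm{span}(v)$ (the paper takes $Q=\id-2vv^T$, you take its negative $2vv^T-\id$, which conjugates identically), then invoke isotropy to force $Q$ and $\Phi(X)$ to commute and conclude $\Phi(X)\.v\in\mathrm{span}(v)$. No gaps.
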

\begin{proof}
	Let $v$ be an eigenvector of $X$ and $Q\in \On$ be the reflection at the hyperplane orthogonal to $v$, i.e.
	\[
		Q\.v=-v\qquad\text{and}\qquad Q\.x=x\qquad\text{for all}\quad x\in\R^n\quad \text{with}\quad \iprod{v,x}=0\,.
	\]
	Then $QXQ^T=X$, and due to the isotropy of $\Phi$,
	\[
		Q\Phi(X)Q^T = \Phi(QXQ^T) = \Phi(X)\qquad\implies\qquad Q\.\Phi(X) = \Phi(X)\.Q
	\]
	and thus
	\[
		Q\Phi(X)\.v\ =\ \Phi(X)Q\.v\ =\ -\Phi(X)\.v\,.
	\]
	Therefore, due to the definition of $Q$, there exists $\lambda\in\R$ with $\Phi(X)\.v=\lambda\.v$, i.e.\ $v$ is eigenvector of $\Phi(X)$.
\end{proof}

On the other hand, not every coaxial tensor function is isotropic. For example, consider the mapping $\Phi\col\Sym(2)\to\Sym(2)$ with $\Phi(X) = X_{11}\.\id$. Then for every $X\in\Symn(2)$, each $v\in\Rnn$ is an eigenvector of the identity $\Phi(X)$, which implies that $\Phi$ is coaxial. However, $\Phi$ is not isotropic, since
\[
	\Phi\Biggl(\underbrace{\matr{0&1\\1&0}}_{Q^T}\underbrace{\matr{1&0\\0&2}}_{X}\underbrace{\matr{0&1\\1&0}}_{Q}\Biggr)=\Phi\matr{2&0\\0&1}=\matr{2&0\\0&2}\neq\matr{1&0\\0&1}=\matr{0&1\\1&0}\Phi\biggl(\matr{1&0\\0&2}\biggr)\matr{0&1\\1&0}\,.
\]

The following Lemma characterizes the crucial difference between coaxiality and bi-coaxiality of an isotropic tensor function.
\begin{lemma}\label{lemma:isotropToBicoaxial}
	Let $\Phi\col M\to\Sym(n)$ be an isotropic tensor function. Then $\Phi$ is bi-coaxial if and only if for every $X\in M$,
	\[
		a_i\neq a_j\qquad\implies\qquad b_i\neq b_j\qquad\text{for all }\; i,j\in\{1,\cdots,n\}\,,
	\]
	where $a_i$ are the eigenvalues of $X$ and $b_i$ the \emph{corresponding} eigenvalues (cf.\ Corollary \ref{cor:correspondingEigenvalues}) of $\Phi(X)$.
\end{lemma}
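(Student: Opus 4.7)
The plan is to reduce the bi-coaxiality statement to an application of Lemma \ref{lemma:commutingToCoaxial} with the roles of the two matrices swapped.

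First I would observe that since $\Phi$ is isotropic, Lemma \ref{lemma:isotropToCoaxial} already gives one half of bi-coaxiality: for every $X\in M$, $X$ is coaxial to $\Phi(X)$. Therefore $\Phi$ is bi-coaxial if and only if, in addition, $\Phi(X)$ is coaxial to $X$ for every $X\in M$. By Lemma \ref{lemma:coaxialToCommuting}, $X$ and $\Phi(X)$ commute, so there exists $Q\in\On$ with
\[
    X = Q^T\diag(a_1,\dotsc,a_n)\.Q\qquad\text{and}\qquad \Phi(X) = Q^T\diag(b_1,\dotsc,b_n)\.Q\,,
\]
and because $X$ is coaxial to $\Phi(X)$, the $b_i$ arranged this way are precisely the \emph{corresponding} eigenvalues in the sense of Corollary \ref{cor:correspondingEigenvalues}.

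Next I would apply Lemma \ref{lemma:commutingToCoaxial} to the commuting pair with $A\colonequals \Phi(X)$ and $B\colonequals X$, in that order. The lemma asserts that $\Phi(X)$ is coaxial to $X$ if and only if $b_i = b_j \implies a_i = a_j$ for all $i,j\in\{1,\dotsc,n\}$. Taking the contrapositive gives exactly
\[
    a_i\neq a_j\qquad\implies\qquad b_i\neq b_j\qquad\text{for all }\;i,j\in\{1,\dotsc,n\}\,,
\]
which is the stated condition. Combining this equivalence with the first reduction yields the claim.

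The only subtle point, which I would be careful to address explicitly, is that the condition in the statement references the \emph{corresponding} eigenvalues from Corollary \ref{cor:correspondingEigenvalues}; this correspondence is well-defined precisely because $X$ is coaxial to $\Phi(X)$ (the isotropic half), and it agrees with the index-by-index pairing obtained from the simultaneous diagonalization in $Q$. With that identification the passage from Lemma \ref{lemma:commutingToCoaxial} to the claim is purely contrapositive, so I do not expect any genuine obstacle beyond bookkeeping.
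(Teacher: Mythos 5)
Your proposal is correct and follows essentially the same route as the paper's proof: use Lemma \ref{lemma:isotropToCoaxial} to get one direction of coaxiality, Lemma \ref{lemma:coaxialToCommuting} to obtain the simultaneous diagonalization, and then Lemma \ref{lemma:commutingToCoaxial} with the roles swapped, concluding by contraposition. Your extra remark about the compatibility of the index-by-index pairing with the correspondence from Corollary \ref{cor:correspondingEigenvalues} is a welcome clarification but does not change the argument.
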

\begin{proof}
	Due to Lemma \ref{lemma:isotropToCoaxial}, the argument $X$ is coaxial to $\Phi(X)$, thus according to Lemma \ref{lemma:coaxialToCommuting}, $X$ and $\Phi(X)$ commute. Then Lemma \ref{lemma:commutingToCoaxial} states that $\Phi(X)$ is coaxial to $X$ if and only if
	\[
		b_i=b_j\quad\implies\quad a_i=a_j\,,\qquad\text{i.e.}\qquad a_i\neq a_j\quad\implies\quad b_i\neq b_j\qquad\text{for all }\; i,j\in\{1,\cdots,n\}\,.\qedhere
	\]
\end{proof}
\begin{figure}[h!]
	\centering
	\tikzremake
	\begin{tikzpicture}[->,>=stealth',implies/.style={thick,double,double equal sign distance,-implies},iff/.style={thick,double,double equal sign distance,implies-implies}]
	 		\node[state] (1){
	 			$A\mapsto B$ isotropic
	 		};
		 	\node[state, right of=1, node distance=8cm] (2){
	  			$A,B$ bi-coaxial
	 		};
	 		\node[state, below of=1, node distance=2cm,xshift= 4cm] (3){%
	  			$A$ coaxial to $B$
	 		};
	 		\node[state, below of=2, node distance=4cm, align=center] (4){
				$A,B$ simultaneously \\ diagonalizable					  			
	 		};
	 		\node[state, left of=4,node distance =8cm](5){
	 			$A,B$ commute
	 		};
	 		\path[line width=1mm]
	 			(1) edge[implies] node[midway, below left]{Lemma \ref{lemma:isotropToCoaxial}} (3)
	 			(2) edge[implies] (3)
	 			(3) edge[implies] node[midway, below left]{Lemma \ref{lemma:coaxialToCommuting}} (4)
	 			(4) edge[iff] node[midway, below]{Proposition \ref{proposition:commutingEqualsSimultan}} (5)
	 			(1) edge node[midway,above]{Lemma \ref{lemma:isotropToBicoaxial}}  node[anchor=midway,below]{$a_i\neq a_j\implies b_i\neq b_j$} (2)
	 			(4) edge [bend right=35] node[pos=.35,above right,align=center]{Lemma \ref{lemma:commutingToCoaxial}\\$a_i= a_j\implies b_i= b_j$} (3.east)
	 		;
	\end{tikzpicture}
	\caption{Relational properties of two matrices $A\,,B\in\Sym(n)$ with corresponding eigenvalues $a_i$, $b_i$.}
	\label{fig:matrixRelation}
\end{figure}
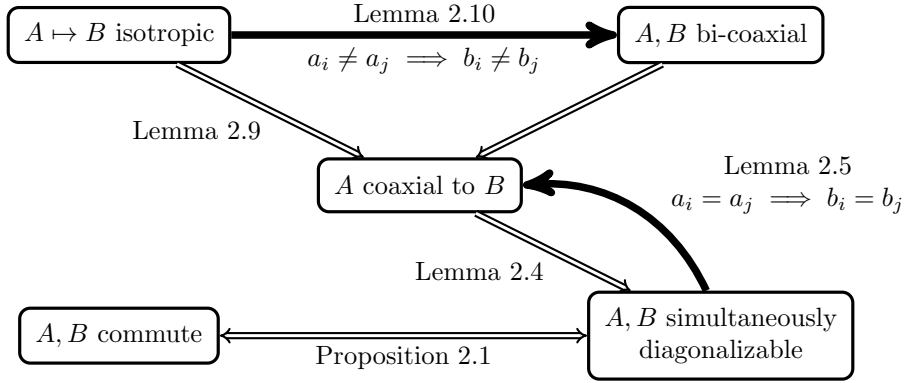

\subsection{Representation of isotropic tensor functions}
The classical representation of isotropic functions using coefficient functions of the matrix invariants is well known and widely utilized in continuum mechanics. Here, we discuss the exact conditions of this representability in detail, highlighting again the exact role played by coaxiality and isotropy in order to emphasize the additional information provided by the bi-coaxial case later on.

\begin{lemma}\label{lemma:repCoaxial}
	Let $A\,,B\in\Sym(n)$ such that $A$ is coaxial to $B$. Then there exist $\gamma_0,\cdots,\gamma_{n-1}\in\R$ with $B=\sum_{k=0}^{n-1}\gamma_k\.A^k$.
\end{lemma}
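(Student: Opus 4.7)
The plan is to reduce the claim to polynomial interpolation on the spectrum of $A$. First I would invoke the preceding lemmas to put $A$ and $B$ in a common diagonal form: since $A$ is coaxial to $B$, Lemma \ref{lemma:coaxialToCommuting} gives $AB=BA$, hence by Proposition \ref{proposition:commutingEqualsSimultan} there exists $Q\in\On$ with $A=Q^T\diag(a_1,\dotsc,a_n)\.Q$ and $B=Q^T\diag(b_1,\dotsc,b_n)\.Q$. Moreover, applying Lemma \ref{lemma:commutingToCoaxial} to the hypothesis that $A$ is coaxial to $B$ yields the compatibility condition
\[
	a_i=a_j\quad\implies\quad b_i=b_j\qquad\text{for all }\; i,j\in\{1,\dotsc,n\}\,.
\]

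Next, I would construct a polynomial $p\in\R[t]$ of degree at most $n-1$ such that $p(a_i)=b_i$ for every $i$. Let $\{\alpha_1,\dotsc,\alpha_m\}$ with $m\leq n$ denote the \emph{distinct} values among $a_1,\dotsc,a_n$, and for each $k\in\{1,\dotsc,m\}$ pick any index $i_k$ with $a_{i_k}=\alpha_k$ and set $\beta_k\colonequals b_{i_k}$. The compatibility condition above ensures that $\beta_k$ is independent of the choice of $i_k$, so the Lagrange interpolation polynomial
\[
	p(t)\ \colonequals\ \sum_{k=1}^{m}\beta_k\prod_{\ell\neq k}\frac{t-\alpha_\ell}{\alpha_k-\alpha_\ell}
\]
is well defined, has degree at most $m-1\leq n-1$, and satisfies $p(\alpha_k)=\beta_k$ for all $k$, hence $p(a_i)=b_i$ for all $i$.

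Finally, writing $p(t)=\sum_{k=0}^{n-1}\gamma_k\.t^k$, the fact that $A^k=Q^T\diag(a_1^k,\dotsc,a_n^k)\.Q$ gives
\[
	\sum_{k=0}^{n-1}\gamma_k\.A^k\ =\ Q^T\diag\bigl(p(a_1),\dotsc,p(a_n)\bigr)\.Q\ =\ Q^T\diag(b_1,\dotsc,b_n)\.Q\ =\ B\,,
\]
which is the desired representation. The only non-routine step is the second one, where the compatibility $a_i=a_j\Rightarrow b_i=b_j$ (a direct consequence of coaxiality via Lemma \ref{lemma:commutingToCoaxial}) is exactly what makes the interpolation problem consistent; without this implication, $B$ would commute with $A$ but need not be a polynomial in $A$ (for instance, the example $A=\diag(1,1,0)$, $B=\diag(0,1,1)$ mentioned earlier in the paper).
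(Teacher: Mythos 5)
Your proof is correct and follows essentially the same route as the paper's: simultaneous diagonalization via Lemma \ref{lemma:coaxialToCommuting}, the compatibility condition $a_i=a_j\Rightarrow b_i=b_j$ from Lemma \ref{lemma:commutingToCoaxial} to make the interpolation problem on the distinct eigenvalues well posed, and then a polynomial of degree at most $n-1$ applied to $A$. The only cosmetic difference is that you construct the interpolating polynomial explicitly via the Lagrange formula, whereas the paper establishes its existence by the nonvanishing of the Vandermonde determinant; these are interchangeable.
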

\begin{proof}
	Since $A$ is coaxial to $B$, due to Lemma \ref{lemma:coaxialToCommuting} there exists $Q\in \On$ with
	\[
		A=Q^T\diag(a_1,\cdots,a_n)\.Q\qquad\text{and}\qquad B=Q^T\diag(b_1,\cdots,b_n)\.Q\,.
	\]
	We only consider the distinct eigenvalues $a_{i_1},\cdots,a_{i_m}$ with $m\leq n$ and define the Vandermonde matrix
	\[
		V(a_{i_1},\ldots,a_{i_m}) \colonequals \begin{pmatrix} 1 & a_{i_1} & a_{i_1}^2 & \ldots & a_{i_1}^{m-1} \\  1 & a_{i_2} & a_{i_2}^2 & \ldots & a_{i_2}^{m-1} \\ \vdots & \vdots & \vdots &  & \vdots \\  1 & a_{i_m} & a_{i_m}^2 & \ldots & a_{i_m}^{m-1} \end{pmatrix}\,,\qquad\det V(a_{i_1},\ldots,a_{i_m})\ =\ \prod_{j<k}(a_{i_j}-a_{i_k})\neq 0\,.
	\]
	Here, the distinctness of the eigenvalues $a_{i_1},\cdots,a_{i_m}$ ensures that the determinant of the Vandermonde matrix is not equal to zero and thus that the system of equations
	\[
		b_{i_k} = \gamma_0 + \gamma_1a_{i_k} + \gamma_2a_{i_k}^2 + \ldots + \gamma_{m-1}a_{i_k}^{m-1}\qquad\text{for all }\; k\in\{1,\ldots,m\}
	\]
	has a single unique solution $(\gamma_0,\ldots,\gamma_{m-1})$. According to Lemma \ref{lemma:commutingToCoaxial}, $a_i=a_{i_k}$ imples $b_i=b_{i_k}$, thus the system of equations
	\[
		b_i = \gamma_0 + \gamma_1a_i + \gamma_2a_i^2 + \ldots + \gamma_{m-1}a_i^{m-1}\qquad\text{for all }\; i\in\{1,\ldots,n\}
	\]
	has a solution $(\gamma_0,\ldots,\gamma_{n-1})$. By simply expressing these equations in terms of diagonal matrices, we find
	\[
		\diag(b_1,\ldots,b_n) = \sum_{k=0}^{n-1}\gamma_k\diag(a_1,\ldots,a_n)^k\quad\iff\qquad B = \sum_{k=0}^{n-1}\gamma_k A^k\,.\qedhere
	\]
\end{proof}
\begin{proposition}\label{prop:repIsotrop}
Let $M\subset\Symn$ be an isotropic set.
	\begin{enumerate}[i)]
		\item
			A tensor function $\Phi\col M\to\Sym(n)$ is coaxial if and only if there exists a representation
			\begin{align}
				\Phi(X) = \sum_{i=0}^{n-1}\gamma_i\.X^i = \gamma_0\.\id + \gamma_1\.X + \ldots + \gamma_{n-1}\.X^{n-1}\qquad\text{for all }X\in M\label{eq:repSymCoaxialOnly}
			\end{align}
			where $\gamma_i$ are scalar-valued functions depending on $X$.
		\item
			A tensor function $\Phi\col M\to\Sym(n)$ is isotropic if and only if there exists a representation
			\begin{align}
				\Phi(X) = \sum_{i=0}^{n-1}\alpha_i\.X^i = \alpha_0\.\id + \alpha_1\.X + \ldots + \alpha_{n-1}\.X^{n-1}\qquad\text{for all }X\in M\label{eq:repSym}
			\end{align}
			where $\alpha_i$ are scalar-valued functions \emph{depending on the matrix invariants} of $X$.
		\item
			If $M\subset\PSymn$, then a tensor function $\Phi\col M\to\Sym(n)$ is isotropic if and only if there exists a representation
			\begin{align}
				\Phi(X) = \sum_{i=-1}^{n-2}\beta_i\.X^i = \beta_{-1}\. X^{-1} + \beta_0\. \id+ \ldots + \beta_{n-2}\.X^{n-2}\qquad\text{for all } X\in M\label{eq:repPSym}
			\end{align}
			where $\beta_i$ are scalar-valued functions depending on the matrix invariants of $X$.
	\end{enumerate}
\end{proposition}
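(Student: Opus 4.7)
The plan is to dispatch all three ``representation implies coaxial/isotropic'' directions uniformly at the outset: every power $X^i$ ($i \ge 0$) shares an orthonormal eigenbasis with $X$, and the same holds for $X^{-1}$ on $\PSymn$, so any scalar combination is automatically coaxial with $X$; moreover $(Q^T X Q)^i = Q^T X^i Q$ (including $i = -1$ when $X \in \PSymn$) and the principal invariants of $X$ are $\On$-invariant, so the coefficient structures in \eqref{eq:repSym} and \eqref{eq:repPSym} directly imply isotropy.

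For part (i), the converse is essentially pointwise: for each fixed $X \in M$, Lemma \ref{lemma:repCoaxial} provides a polynomial in $X$ of degree at most $m-1 \le n-1$ (where $m$ is the number of distinct eigenvalues of $X$) equal to $\Phi(X)$; padding with zero coefficients in the higher-degree slots produces scalar functions $\gamma_0(X), \ldots, \gamma_{n-1}(X)$ realizing \eqref{eq:repSymCoaxialOnly}.

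The main obstacle lies in the converse of part (ii): upgrading the $X$-dependent coefficients from (i) to functions depending only on the invariants of $X$. By Lemma \ref{lemma:isotropToCoaxial} an isotropic $\Phi$ is coaxial, so (i) furnishes a representation $\Phi(X) = \sum_{k=0}^{n-1} \gamma_k(X)\, X^k$. The key observation is that the $\On$-orbits in $\Symn$ are parameterized exactly by the principal invariants, and isotropy transports the representation along each orbit: if $\tilde X = Q^T X Q$ for some $Q \in \On$, then
\[
  \Phi(\tilde X) \;=\; Q^T \Phi(X)\, Q \;=\; \sum_{k=0}^{n-1} \gamma_k(X)\, (Q^T X Q)^k \;=\; \sum_{k=0}^{n-1} \gamma_k(X)\, \tilde X^k ,
\]
so the choice $\gamma_k(\tilde X) \colonequals \gamma_k(X)$ is admissible and constant along each orbit. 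Since orbits are indexed by the invariants, the $\gamma_k$ descend to functions $\alpha_k = \alpha_k(I_1(X), \ldots, I_n(X))$ as required in \eqref{eq:repSym}.

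Finally, part (iii) reduces to (ii) via Cayley--Hamilton. On $\PSymn$ the characteristic identity
\[
  X^n - I_1\, X^{n-1} + I_2\, X^{n-2} - \cdots + (-1)^n I_n\, \id \;=\; 0
\]
holds with $I_n = \det X \neq 0$; multiplying by $X^{-1}$ rewrites $X^{n-1}$ as a linear combination of $X^{-1}, \id, X, \ldots, X^{n-2}$ whose coefficients are rational functions of the principal invariants. Substituting this identity into the representation of (ii) and collecting terms yields \eqref{eq:repPSym} with $\beta_i$'s depending only on the invariants; the reverse direction is covered by the opening observation, now also applied to $X^{-1}$.
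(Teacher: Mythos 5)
Your proposal is correct and follows essentially the same route as the paper: the easy directions are verified directly, part (i) comes from Lemma \ref{lemma:repCoaxial}, part (ii) is obtained by transporting the coaxial representation along $\On$-orbits (the paper fixes a diagonal representative $\Xtilde$ per invariant class, which is just a concrete way of making your ``constant on orbits'' choice well defined), and part (iii) uses Cayley--Hamilton to trade $X^{n-1}$ for $X^{-1}$. No substantive differences.
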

\begin{proof}
	It is easy to verify that the given expressions are coaxial and isotropic, respectively. The representability of a coaxial mapping in the form \eqref{eq:repSymCoaxialOnly} follows directly from Lemma \ref{lemma:repCoaxial}.
	
	Now let $\Phi$ by isotropic. For given matrix invariants $I_1,\dotsc,I_n$, define $\alpha_i(I_1,\dotsc,I_n) = \gamma_i(\Xtilde)$ as the coefficients corresponding to the diagonal matrix $\Xtilde=\diag(a_1,\dotsc,a_n)$ with eigenvalues $a_1,\dotsc,a_n$ such that $I_k(\Xtilde)=I_k$. Then for all $X\in\Symn$ with invariants $I_1,\dotsc,I_n$, there exists $Q\in\On$ such that $X=Q^T\Xtilde Q$ and thus, due to the isotropy of $\Phi$,
	\begin{align*}
		\Phi(X) = \Phi(Q^T\Xtilde Q) &= Q^T\Phi(\Xtilde)Q\\
		&= Q^T\big[\gamma_0(\Xtilde)\.\id  + \ldots + \gamma_{n-1}(\Xtilde)\.(\Xtilde)^{n-1}\big]\,Q\\
		&= \gamma_0(\Xtilde)\.\id  + \ldots + \gamma_{n-1}(\Xtilde)\.(Q^T\Xtilde Q)^{n-1}\\
		&= \gamma_0(\Xtilde)\.\id  + \ldots + \gamma_{n-1}(\Xtilde)\.X^{n-1}
		\;=\; \alpha_0(I_1,\dotsc,I_n)\.\id  + \ldots + \alpha_{n-1}(I_1,\dotsc,I_n)\.X^{n-1}\,.
	\end{align*}
	
	Furthermore, for any $X\in\PSym(n)$, due to the Cayley-Hamilton Theorem there exist (invariant) coefficients $\betatilde_{-1},\betatilde_0,\cdots,\betatilde_{n-2}$ with
	\[
		0 = -X^n + \betatilde_{n-2}\.X^{n-1} + \ldots + \betatilde_0\.X + \betatilde_{-1}\.\id\qquad\iff\qquad X^{n-1} = \betatilde_{n-2}\.X^{n-2} + \ldots + \betatilde_0\.\id + \betatilde_{-1}\.X^{-1}\,.
	\]
	Combining this with equation \eqref{eq:repSym}, we find
	\[
		\Phi(X) = (\underbrace{\alpha_0 + \alpha_{n-1}\betatilde_0}_{\colonequals\beta_0})\.\id + (\underbrace{\alpha_1 + \alpha_{n-1}\betatilde_1}_{\colonequals\beta_1})\,X + \ldots + (\underbrace{\alpha_{n-2}+\alpha_{n-1}\betatilde_{n-1}}_{\colonequals\beta_{n-2}})\,X^{n-2} + \underbrace{\alpha_{n-1}\betatilde_{-1}}_{\colonequals\beta_{-1}}X^{-1}\,,
	\]
	which shows iii).
\end{proof}
%
%
%
\section{Cauchy stress and stretch}
One of the main areas of application for the above results is the theory of isotropic nonlinear elasticity, where the Cauchy stress response function $\sigmahat\col\PSym(3)\to\Sym(3),\, B\mapsto\sigmahat(B)=\sigma$, mapping the left Cauchy-Green stretch tensor $B=FF^T$ corresponding to a deformation gradient $F\in\GLpn$ to the Cauchy stress tensor $\sigma$, is isotropic and thus coaxial. Accordingly, the stress response can be represented in the form
\begin{align}
	\sigmahat(B) = \beta_0\.\id + \beta_1\.B + \beta_{-1}\.B^{-1}\,.\label{eq:repSigma}
\end{align}
In the following, we elaborate the connection between bi-coaxility and different constitutive requirements, including Truesdell's empirical inequalities. Although we will focus on the three-dimensional case, our results can easily be adapted to the general $n$-dimensional case, see \cite[p.~38]{agn_thiel2017neue}.

\begin{definition}
\label{definition:semiInvertibility}
	A mapping $\widehat\sigma\colon\PSym(3)\to\Sym(3)$ is called \emph{semi-invertible} if there exists a representation
	\begin{align}
		B = \psi_0\.\id + \psi_1\.\widehat\sigma + \psi_2\.\widehat\sigma^2\label{eq:semiInvertible}
	\end{align}
	with real valued functions $\psi_0$, $\psi_1$, $\psi_2$ depending on the principal invariants of $B$ (and not of $\sigma$).
\end{definition}
The semi-invertibility of the Cauchy stress response function is often used in isotropic nonlinear elasticity to infer particular properties of $B=FF^T$ from the form of a given Cauchy stress $\sigma=\sigmahat(B)$, cf.\ Section~\ref{section:mainSummary}. For example, Marzano \cite{marzano1983interpretation} used the representation formula \eqref{eq:semiInvertible} to show that any Cauchy stress tensor of the form $\sigma=\diag(s,0,0)$ with $s>0$, i.e.\ uniaxial tension, is caused by a simple stretch $V-\id=\sqrt{FF^T}-\id=\diag(\alpha,0,0)$ with $\alpha>0$ for any isotropic law of elasticity which satisfies the Baker-Ericksen inequalities,\footnote{%
	Marzano seems to claim that the reverse implication holds as well, i.e.\ that an elastic law for which a Cauchy stress of the form $\diag(s,0,0)$ with $s>0$ is only caused by a stretch $V$ with $V-\id=\sqrt{FF^T}-\id=\diag(\alpha,0,0)$, $\alpha>0$, always satisfies the Baker-Ericksen inequalities. However, this is not true in general, cf.\ Appendix \ref{appendix:marzano}.%
}
cf.\ Definition \ref{definition:bakerEricksen} and Lemma \ref{corollary:BEimpliesSemiInvertibility}.

Johnson and Hoger \cite{johnson1993dependence} have shown that if a mapping of the form \eqref{eq:repSigma} is semi-invertible, then the functions $\psi_i$ are given by
\begin{align}
	\psi_0 &= \frac{1}{\mathcal A}\left(\beta_0^2-2\.\beta_{-1}\beta_1+I_2\.\beta_1^2+I_3\.\frac{\beta_0\.\beta_1^2}{\beta_{-1}}+I_1\.\beta_{-1}^2+\frac{I_2}{I_3}\.\beta_0\.\beta_{-1}\right)\,,\notag\\
	\psi_2 &= -\frac{1}{\mathcal A}\left(2\.\beta_0+I_3\.\frac{\beta_1^2}{\beta_{-1}}+\frac{I_2}{I_3}\.\beta_{-1}\right)\,,\label{eq:psiSemiInvertible}\\
	 \psi_3 &= \frac{1}{\mathcal A}\,,\qquad\mathcal A = I_1\.\beta_1^2-I_3\.\frac{\beta_1^3}{\beta_{-1}}+\frac{1}{I_3}\beta_{-1}^2-\frac{I_2}{I_3}\.\beta_1\.\beta_{-1}\,.\notag
\end{align}
Note again that (by virtue of the functions $\beta_i$), the coefficient functions $\psi_i$ depend on the invariants of $B$, not on $\sigmahat(B)$. Therefore, equation \eqref{eq:semiInvertible} does not imply the invertibility\footnote{Lurie has noted that \enquote{[The inversion of $B\to\sigma(B)$] can only be solved completely for some especially "fortunate" prescriptions of the dependence of the function $W$ on the invariants} \cite[p.224]{lurie2012non}.} (i.e.\ the bijectivity) or even the injectivity of the mapping $B\mapsto\widehat\sigma(B)$.\footnote{%
	If, however, the functions $\psi_i$ are constant, then the mapping $\sigmahat$ is indeed invertible. In particular, this is the case if the functions $\beta_i$ are constant (which is, for example, satisfied for the Cauchy stress response function induced by the Mooney-Rivlin energy function $W=c_1\.(I_1-3)+c_2\.(I_2-3)$ in the incompressible framework). For additional remarks on the injectivity of the Cauchy stress response, see \cite{agn_neff2016injectivity,agn_mihai2016hyperelastic,agn_mihai2018hyperelastic}.%
}
For example, consider the mapping $\sigmahat\col\PSym(3)\to\Sym(3)$ with $\sigmahat(B)=\dev_3 B=B-\frac{1}{3}\tr(B)\.\id$, which is semi-invertible due to the equality $B=\frac{1}{3}\tr(B)\.\id+\sigmahat(B)$, but not invertible since $\sigmahat(\id)=0=\sigmahat(0)$. On the other hand, invertibility directly implies semi-invertibility.

\begin{lemma}
	Every invertible isotropic function $\widehat\sigma\colon\PSym(3)\to\Sym(3)$ is semi-invertible.
\end{lemma}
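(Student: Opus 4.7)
The plan is to exploit the fact that the inverse of an isotropic function is again isotropic, so that Proposition~\ref{prop:repIsotrop}~ii) supplies a quadratic representation of $\widehat\sigma^{-1}$ in its argument. Composing with $\widehat\sigma$ then gives precisely the representation \eqref{eq:semiInvertible}; the only subtlety is showing that the coefficient functions can be viewed as functions of the invariants of $B$ rather than of $\widehat\sigma(B)$, which will follow from isotropy.

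First I would verify that the image $M \colonequals \widehat\sigma(\PSym(3)) \subset \Sym(3)$ is an isotropic set: for any $\sigma = \widehat\sigma(B) \in M$ and $Q\in\On$, the isotropy of $\widehat\sigma$ gives $Q^T\sigma Q = Q^T\widehat\sigma(B)Q = \widehat\sigma(Q^TBQ) \in M$. Then I would check that the inverse $\widehat\sigma^{-1}\col M \to \PSym(3)$ is itself isotropic: applying $\widehat\sigma$ to $Q^T\widehat\sigma^{-1}(\sigma)Q$ and again using isotropy of $\widehat\sigma$ yields $Q^T\sigma Q$, so by injectivity $\widehat\sigma^{-1}(Q^T\sigma Q) = Q^T\widehat\sigma^{-1}(\sigma)Q$.

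Next, by Proposition~\ref{prop:repIsotrop}~ii) applied to $\widehat\sigma^{-1}$ on the isotropic domain $M \subset \Sym(3)$, there exist scalar functions $\alpha_0,\alpha_1,\alpha_2$ of the matrix invariants of $\sigma$ such that
\[
	\widehat\sigma^{-1}(\sigma) \;=\; \alpha_0\.\id + \alpha_1\.\sigma + \alpha_2\.\sigma^2\qquad\text{for all } \sigma\in M\,.
\]
Substituting $\sigma = \widehat\sigma(B)$ for an arbitrary $B\in\PSym(3)$ gives
\[
	B \;=\; \alpha_0\.\id + \alpha_1\.\widehat\sigma(B) + \alpha_2\.\widehat\sigma(B)^2\,,
\]
with the $\alpha_i$ evaluated at the invariants of $\widehat\sigma(B)$.

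The remaining (and only delicate) step is to rewrite these coefficients as functions of the invariants of $B$ itself, as required by Definition~\ref{definition:semiInvertibility}. Here I would invoke isotropy of $\widehat\sigma$ one more time: if $B = Q^T\diag(b_1,b_2,b_3)Q$, then $\widehat\sigma(B) = Q^T\widehat\sigma(\diag(b_1,b_2,b_3))Q$, so the spectrum of $\widehat\sigma(B)$ depends only on the spectrum of $B$, and hence the invariants of $\widehat\sigma(B)$ are functions of the invariants of $B$. Setting $\psi_i(I_1,I_2,I_3) \colonequals \alpha_i$ evaluated at these composed invariants then yields the desired representation \eqref{eq:semiInvertible}. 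I expect no real obstacle beyond this bookkeeping; the substantive input is Proposition~\ref{prop:repIsotrop}~ii), and everything else is a direct consequence of isotropy plus invertibility.
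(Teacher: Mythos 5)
Your proposal is correct and follows essentially the same route as the paper's proof: establish isotropy of $\widehat\sigma^{-1}$, apply the representation result of Proposition~\ref{prop:repIsotrop}, substitute $\sigma=\widehat\sigma(B)$, and then observe that the resulting scalar coefficients are isotropic functions of $B$ and hence expressible through the invariants of $B$. Your extra care in verifying that the image set is isotropic and in deducing isotropy of the inverse via injectivity is a harmless variation in bookkeeping, not a different argument.
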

\begin{proof}
	The isotropy of $\sigmahat$ implies
	\[
		\sigmahat\inv(Q^TY\.Q)=\sigmahat\inv(Q^T\sigmahat(B)\.Q)=\sigmahat\inv(\sigmahat(Q^TB\.Q))=Q^TB\.Q=Q^T\sigmahat\inv(\sigmahat(B))\.Q=Q^T\sigmahat\inv(Y)\.Q
	\]
	for all $Q\in\OO(3)$ and all $Y=\sigmahat(B)\in\Sym(3)$, i.e.\ the isotropy of the inverse $\sigmahat\inv$. Then due to Proposition \ref{prop:repIsotrop}, $\sigmahat\inv$ can be written as
	\[
		\sigmahat\inv(Y) = \alpha_0\.\id + \alpha_1\.Y + \alpha_2\.Y^2\qquad\iff\qquad B = \alpha_0\.\id + \alpha_1\.\sigmahat(B) + \alpha_{2}\.\sigmahat(B)^{2}
	\]
	for all $Y=\sigmahat(B)\in\Sym(3)$ with coefficients
	$\alpha_i(I_1(\sigmahat(B)),I_2(\sigmahat(B)),I_3(\sigmahat(B)))$ depending on the matrix invariants of $Y=\sigmahat(B)$. The mappings $B\mapsto\alpha_i$ are isotropic, therefore there exists the representation $\alpha_i=\psi_i(I_1(B),I_2(B),I_3(B))$, yielding a representation of the form \eqref{eq:semiInvertible}.
\end{proof}

The following proposition shows the underlying connection between the concepts discussed in Section \ref{section:basicLinearAlgebra} and the notion of semi-invertibility.

\begin{proposition}
	An isotropic function $\widehat\sigma\colon\PSym(3)\to\Sym(3)$ is semi-invertible if and only if it is bi-coaxial.
\end{proposition}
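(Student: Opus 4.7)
The plan is to prove the two implications separately, using the linear algebra from Section \ref{section:basicLinearAlgebra} to reduce semi-invertibility to a solvability question for a Vandermonde-type system on eigenvalues.

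For the forward direction, suppose $\sigmahat$ is semi-invertible, so $B = \psi_0\.\id + \psi_1\.\sigmahat(B) + \psi_2\.\sigmahat(B)^2$. Since $\sigmahat$ is isotropic, Lemma \ref{lemma:isotropToCoaxial} yields that $B$ is coaxial to $\sigmahat(B)$. Conversely, every eigenvector of $\sigmahat(B)$ with eigenvalue $s$ is automatically an eigenvector of each power $\sigmahat(B)^k$ (with eigenvalue $s^k$), hence an eigenvector of the polynomial $\psi_0 + \psi_1\sigmahat(B) + \psi_2\sigmahat(B)^2 = B$. Thus $\sigmahat(B)$ is coaxial to $B$, and the two matrices are bi-coaxial.

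For the reverse direction, fix $B\in\PSym(3)$ and let $b_1,b_2,b_3$ be its eigenvalues with corresponding eigenvalues $s_1,s_2,s_3$ of $\sigmahat(B)$. Bi-coaxiality together with Lemma \ref{lemma:coaxialToCommuting} provides $Q\in\On$ simultaneously diagonalizing the two tensors, and Lemma \ref{lemma:isotropToBicoaxial} guarantees the contrapositive condition $s_i = s_j \Rightarrow b_i = b_j$. I would then seek $\psi_0,\psi_1,\psi_2\in\R$ such that $b_i = \psi_0 + \psi_1\.s_i + \psi_2\.s_i^2$ for $i=1,2,3$. Restricting to the $m\leq 3$ distinct values among $s_1,s_2,s_3$, the system has $m$ independent equations whose coefficient matrix is an $m\times 3$ Vandermonde block of full row rank; by the bi-coaxiality condition the $b_i$ values agree across rows with the same $s_i$, so the reduced system is consistent and admits a solution. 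For $m<3$, fixing the remaining free variables canonically (e.g.\ setting $\psi_2 = 0$ when $m=2$ and $\psi_1 = \psi_2 = 0$ when $m=1$) makes the choice unique. Conjugating the diagonal identity by $Q$ yields $B = \psi_0\.\id + \psi_1\.\sigmahat(B) + \psi_2\.\sigmahat(B)^2$.

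It remains to verify that these $\psi_i$ depend only on the principal invariants of $B$. If $B'\in\PSym(3)$ shares the invariants of $B$, then $B' = R^T B\.R$ for some $R\in\On$, and isotropy forces $\sigmahat(B') = R^T\sigmahat(B)\.R$; the multiset of pairs $\{(b_i,s_i)\}$ is therefore invariant under such changes of representative, so the $\psi_i$ produced by the construction above agree for $B$ and $B'$. The main obstacle is precisely the degenerate cases in which some $s_i$ coincide: without bi-coaxiality, the Vandermonde system could be inconsistent (when $s_i = s_j$ yet $b_i \neq b_j$), and without a canonical tie-breaking the coefficients would fail to descend to functions of the invariants of $B$. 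Lemma \ref{lemma:isotropToBicoaxial} resolves the first difficulty, and the explicit normalization above resolves the second.
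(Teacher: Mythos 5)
Your proof is correct and follows essentially the same route as the paper: the forward direction is the identical eigenvector-of-a-polynomial argument combined with Lemma \ref{lemma:isotropToCoaxial}, and the reverse direction simply unpacks the Vandermonde argument that the paper delegates to Lemma \ref{lemma:repCoaxial} before using isotropy to show the coefficients descend to functions of the invariants of $B$. If anything, your explicit tie-breaking normalization in the degenerate case $m<3$ makes the well-definedness of the $\psi_i$ slightly more transparent than the paper's brief invariance remark.
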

\begin{proof}
	Let $\sigmahat$ be semi-invertible. Due to the isotropy, $B$ is coaxial to $\sigmahat(B)$ for all $B\in\PSym(3)$ according to Lemma \ref{lemma:isotropToCoaxial}, thus it remains to show that $\sigmahat(B)$ is coaxial to $B$. Let $v\in\R^3$ be an eigenvector of $\sigmahat(B)$ to the eigenvalue $\lambda\in\R$. Then the semi-invertibility of $\sigmahat$ implies
	\[
		B\.v=\psi_0\.\id\.v + \psi_1\.\widehat\sigma(B)\.v + \psi_2\.\widehat\sigma(B)^2\.v=\psi_0\.v + \psi_1\.\lambda\.v + \psi_2\.\lambda^2v=(\psi_0 + \psi_1\.\lambda + \psi_2\.\lambda^2)\.v\,,
	\]
	thus $v$ is an eigenvector of $B$ for each eigenvector $v$ of $\sigmahat(B)$, which implies that $\sigmahat$ is bi-coaxial.
	
	Now let $\sigmahat$ be a bi-coaxial mapping. Then in particular, $\sigmahat(B)$ is coaxial to $B$ for every $B\in\PSym(3)$. According to Lemma \ref{lemma:repCoaxial}, there exists a representation
	\begin{equation}\label{eq:semiInvertibilityFixedCoefficients}
		B = \alpha_0\.\id + \alpha_1\.\widehat\sigma(B) + \alpha_2\.\widehat\sigma(B)^2
	\end{equation}
	with $\alpha_i\in\R$ depending on $B$. Due to the isotropy of $\sigmahat$,
	\[
		Q^TB\.Q=Q^T\left[\alpha_0\.\id + \alpha_1\.\widehat\sigma(B) + \alpha_2\.\widehat\sigma(B)^2\right]Q=\alpha_0\.\id + \alpha_1\.\widehat\sigma(Q^TB\.Q) + \alpha_2\.\widehat\sigma(Q^TB\.Q)^2\,,
	\]
	i.e.\ the coefficients $\alpha_i$ are invariant under the transformation $B\mapsto Q^TBQ$. Thus there exists a representation $\alpha_i=\psi_i(I_1(B),I_2(B),I_3(B))$ which, combined with \eqref{eq:semiInvertibilityFixedCoefficients}, yields the semi-invertibility of $\sigmahat$.
\end{proof}
\subsection{Constitutive requirements in nonlinear elasticity}

In nonlinear elasticity theory, additional assumptions on the stress-strain relation are commonly known as \enquote{constitutive requirements}. A number of these requirements are directly connected to the semi-invertibility, i.e.\ the bi-coaxiality of the Cauchy stress response function.

Corollary \ref{cor:correspondingEigenvalues} allows us to employ the following notation here and throughout: For an isotropic function $\widehat\sigma\colon\PSym(3)\to\Sym(3)$ and given $B\in\PSym(3)$, the eigenvalues of $B$ will be denoted by $\lambda_1^2,\dotsc,\lambda_n^2$, and the corresponding eigenvalues of $\sigmahat(B)$ will be denoted by $\sigma_1,\dotsc,\sigma_n$. For a Cauchy stress response function $\sigmahat$ depending on the left Cauchy-Green stretch tensor $B$, the value $\sigma_i$ represents the principal stress corresponding to the principal stretch $\lambda_i$.

\begin{definition}
\label{definition:bakerEricksen}
	An isotropic function $\widehat\sigma\colon\PSym(3)\to\Sym(3)$ satisfies the so-called \emph{Baker-Ericksen inequalities} (BE) \cite{bakerEri54} if
	\begin{equation}\label{eq:BE}
		\lambda_i\geq \lambda_j\qquad\implies\qquad \sigma_i\geq\sigma_j\qquad\text{for all }\; i,j\in\{1,\cdots,n\} \tag{BE}
	\end{equation}
	for all $B\in\PSym(3)$. Furthermore, $\sigmahat(B)$ satisfies the so-called \emph{strict Baker-Ericksen inequalities} $\bep$ if
	\begin{equation}\label{eq:BEp}
		\lambda_i> \lambda_j\qquad\implies\qquad \sigma_i>\sigma_j\qquad\text{for all }\; i,j\in\{1,\cdots,n\} \tag{BE$^+$}
	\end{equation}
	for all $B\in\PSym(3)$.
\end{definition}
Note that $\lambda_i= \lambda_j$ implies $\sigma_i=\sigma_j$ due to Lemma \ref{lemma:commutingToCoaxial}, thus $\bep$ implies (BE). Furthermore, the strict Baker-Ericksen inequalites ensure that $\lambda_i\neq \lambda_j$ implies $\sigma_i\neq\sigma_j$ and therefore, with Lemma \ref{lemma:isotropToBicoaxial}, condition $\bep$ implies bi-coaxility and thus semi-invertibility of the stress-strain relation.
\begin{proposition}
\label{corollary:BEimpliesSemiInvertibility}
	Let $\widehat\sigma\colon\PSym(3)\to\Sym(3)$ be an isotropic tensor function which satisfies $\bep$. Then $\widehat\sigma$ is semi-invertible.
\end{proposition}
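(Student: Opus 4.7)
The plan is to chain together the previously developed equivalences: $\bep$ $\Rightarrow$ bi-coaxiality $\Rightarrow$ semi-invertibility. All the machinery is already in place; what remains is essentially bookkeeping about eigenvalue correspondences.

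First, I would invoke Lemma \ref{lemma:isotropToCoaxial}: since $\sigmahat$ is isotropic, $B$ is coaxial to $\sigmahat(B)$ for every $B\in\PSym(3)$. Hence by Lemma \ref{lemma:coaxialToCommuting}, $B$ and $\sigmahat(B)$ commute, and Corollary \ref{cor:correspondingEigenvalues} applies, giving us well-defined corresponding eigenvalues $\sigma_1,\sigma_2,\sigma_3$ of $\sigmahat(B)$ paired with the eigenvalues $\lambda_1^2,\lambda_2^2,\lambda_3^2$ of $B$.

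Next I would establish bi-coaxiality via Lemma \ref{lemma:isotropToBicoaxial}, whose criterion is: $a_i\neq a_j \implies b_i\neq b_j$. Here $a_i = \lambda_i^2$ and $b_i = \sigma_i$. Suppose $\lambda_i^2\neq\lambda_j^2$. Because $B\in\PSym(3)$, the principal stretches $\lambda_i,\lambda_j>0$ are positive square roots, so $\lambda_i^2\neq\lambda_j^2$ is equivalent to $\lambda_i\neq\lambda_j$. Without loss of generality $\lambda_i>\lambda_j$, and then $\bep$ immediately yields $\sigma_i>\sigma_j$, so in particular $\sigma_i\neq\sigma_j$. This verifies the hypothesis of Lemma \ref{lemma:isotropToBicoaxial}, so $\sigmahat$ is bi-coaxial.

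Finally, I would apply the preceding proposition (the equivalence \emph{$\sigmahat$ semi-invertible $\iff$ $\sigmahat$ bi-coaxial} for isotropic maps) to conclude that $\sigmahat$ is semi-invertible, finishing the proof. There is no genuine obstacle; the only thing to be careful about is the translation between the eigenvalues $\lambda_i^2$ of $B$ (which is what Lemma \ref{lemma:isotropToBicoaxial} and Corollary \ref{cor:correspondingEigenvalues} refer to) and the principal stretches $\lambda_i$ appearing in the formulation of $\bep$. Since positivity of $B$ makes $x\mapsto\sqrt{x}$ injective on the spectrum, this translation is immediate.
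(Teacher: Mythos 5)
Your proposal is correct and follows exactly the route the paper itself takes: it notes that $\bep$ gives $\lambda_i\neq\lambda_j\implies\sigma_i\neq\sigma_j$, invokes Lemma \ref{lemma:isotropToBicoaxial} to get bi-coaxiality, and then uses the equivalence of bi-coaxiality and semi-invertibility. Your extra care about passing between the eigenvalues $\lambda_i^2$ of $B$ and the stretches $\lambda_i$ is a welcome (if minor) added precision over the paper's one-line argument.
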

While invertibility and (BE$^+$) both ensure the semi-invertibility, they are not immediately related: The function $\sigmahat(B)=\id-B$, for example, is invertible but does not satisfy the Baker-Ericksen inequalities, while $\sigmahat(B)=\dev_3 B$ is not invertible but satisfies $\bep$, since
\[
	\sigma_i>\sigma_j\qquad\iff\qquad\lambda_i-\frac{1}{3}(\lambda_1+\lambda_2+\lambda_3)>\lambda_j-\frac{1}{3}(\lambda_1+\lambda_2+\lambda_3)\qquad\iff\qquad \lambda_i>\lambda_j
\]
for all $i,j\in\{1,\cdots,n\}$.
An even more restricting requirement ensuring the semi-invertibility is given by Truesdell's empirical inequalites \cite{truesdell1975inequalities,truesdell2012elements}.
\begin{definition}
	An isotropic function $\widehat\sigma\colon\PSym(3)\to\Sym(3)$ with
	\[
		\sigmahat(B) = \beta_0\.\id + \beta_1\.B + \beta_{-1}\.B^{-1}
	\]
	satisfies the \emph{empirical} inequalities or \emph{\textbf{e}mpirical \textbf{t}rue \textbf{s}tress \textbf{s}tretch} $\etss$ inequalities\footnote{Also called an \emph{adscititious inequality} by Truesdell \cite{moon1974interpretation}.} if
	\[
		\beta_{-1}\leq 0\,,\quad\beta_0\leq 0\,,\quad\beta_1 > 0 \quad\text{ for all }\; B\in\PSym(3)\,.
	\]
\end{definition}

\subsection{The weak empirical inequalities}
Due to \eqref{eq:psiSemiInvertible}, the empirical inequalities ensure $\psi_i>0$ for all $i\in\{1,2,3\}$. We will show that the condition $\beta_0\leq 0$ is not necessary for the bi-coaxiality of the elasticity law and is too strict for most hyperelastic energy functions. Therefore, we introduce in the following the weaker inequalities.
\begin{definition}
\label{definition:weakEmpirical}
	An isotropic function $\widehat\sigma\colon\PSym(3)\to\Sym(3)$ with
	\[
		\sigmahat(B) = \beta_0\.\id + \beta_1\.B + \beta_{-1}\.B^{-1}
	\]
	satisfies the \emph{\textbf{w}eak \textbf{e}mpirical \textbf{t}rue \textbf{s}tress \textbf{s}tretch} $\wetss$ or \emph{weak empirical inequalities} if for all $B\in\PSym(3)\setminus\Rp\cdot\id$,
	\begin{equation}\label{eq:definitionWeakEmpirical}
		\beta_{-1}\leq 0 \qquad\text{and}\qquad \beta_1 \geq 0
	\end{equation}
	with one of the two inequalities being strict.\footnote{%
		The requirement that for each $B$ one of the inequalities in \eqref{eq:definitionWeakEmpirical} is strict can be stated more formally as \enquote{for each $B\in\PSym(3)\setminus\Rp\cdot\id$, there exists $i\in\{-1,1\}$ such that $\beta_i\neq0$}, which is a weaker requirement than \enquote{there exists $i\in\{-1,1\}$ such that for all $B\in\PSym(3)\setminus\Rp\cdot\id$, \;$\beta_i\neq0$}.%
	}
\end{definition}
Note carefully that the inequalities \eqref{eq:definitionWeakEmpirical} are not required to hold if $B=FF^T$ is a multiple of the identity tensor $\id$, which corresponds to the case $F\in\Rp\cdot\SO(3)$ of a conformal deformation tensor or, equivalently, three equal principal stretches $\lambda_1=\lambda_2=\lambda_3$. Of course, if $\beta_1$ and $\beta_{-1}$ are continuous, than the (non-strict) inequalities are still satisfied in this case.

However, there are multiple reasons to exclude these purely volumetric stretch tensors from the above definition. First, the geometric interpretation given in Remark \ref{remark:weakEmpiricalInterpretation} is no longer directly applicable to purely volumetric stretches, thus there is no direct physical motivation in this case. Second, the case of non-simple eigenvalues is often notoriously difficult from a computational point of view (cf.\ the example of the Hencky energy discussed in Section \ref{section:henckyExample}), thus by omitting the case $\lambda_1=\lambda_2=\lambda_3$, the task of verifying that the criterion $\wetss$ holds for a given constitutive model may be simplified considerably. Furthermore, unless continuity of the functions $\beta_i$ is explicitly required, their values at $B=\lambda\.\id$ are not uniquely determined by the stress response alone; note that in this case, $\sigma(B)=(\beta_0+\lambda\beta_1+\lambda\inv\beta_{-1})\.\id$.

Finally, our main motivation for introducing an alternative to the classical empirical inequalities is to provide a weaker condition which is still sufficient to ensure the semi-invertibility of the stress response. The following lemma states that for this purpose, the requirements given in Definition \ref{definition:weakEmpirical} are still adequate.
\begin{lemma}
	Any isotropic function $\widehat\sigma\colon\PSym(3)\to\Sym(3)$ which satisfies the weak empirical inequalities (WE-TSS) satisfies the strict Baker-Ericksen inequalites.
\end{lemma}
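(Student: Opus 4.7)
The plan is to compute the principal stresses $\sigma_i$ directly from the representation $\sigmahat(B) = \beta_0\.\id + \beta_1\.B + \beta_{-1}\.B^{-1}$ and show that the sign conditions on $\beta_1,\beta_{-1}$ translate into a strict ordering of the $\sigma_i$ whenever the $\lambda_i$ are strictly ordered.

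First I would fix $B\in\PSymn(3)$ with eigenvalues $\lambda_1^2,\lambda_2^2,\lambda_3^2$ and an eigenvector $v_i$ of $B$ to $\lambda_i^2$. By coaxiality of the isotropic function $\sigmahat$ (Lemma \ref{lemma:isotropToCoaxial}), $v_i$ is an eigenvector of $\sigmahat(B)$, and since $v_i$ is also an eigenvector of $B^{-1}$ to $\lambda_i^{-2}$, the representation formula immediately yields
\[
  \sigma_i \;=\; \beta_0 + \beta_1\.\lambda_i^2 + \beta_{-1}\.\lambda_i^{-2}.
\]
From this one reads off
\[
  \sigma_i - \sigma_j \;=\; \beta_1\.(\lambda_i^2-\lambda_j^2) + \beta_{-1}\.(\lambda_i^{-2}-\lambda_j^{-2})
  \;=\; (\lambda_i^2-\lambda_j^2)\left(\beta_1 \;-\; \frac{\beta_{-1}}{\lambda_i^2\lambda_j^2}\right),
\]
where the last equality uses $\lambda_i^{-2}-\lambda_j^{-2} = -(\lambda_i^2-\lambda_j^2)/(\lambda_i^2\lambda_j^2)$.

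Next I would invoke the assumption. Suppose $\lambda_i>\lambda_j$ for some $i,j$. Then $\lambda_i^2-\lambda_j^2>0$, and in particular $B$ has two distinct eigenvalues, so $B\notin\Rp\cdot\id$ and the weak empirical inequalities apply: $\beta_1\geq 0$, $\beta_{-1}\leq 0$, with at least one inequality strict. Consequently $\beta_1 - \beta_{-1}/(\lambda_i^2\lambda_j^2)$ is a sum of two non-negative numbers at least one of which is strictly positive, hence the bracket is strictly positive. Multiplying by the strictly positive first factor gives $\sigma_i-\sigma_j>0$, which is exactly $\bep$.

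There is essentially no obstacle: the argument is a one-line algebraic identity combined with sign-tracking. The only subtlety worth flagging in the write-up is why we may appeal to $\wetss$ at this particular $B$, namely that $\lambda_i>\lambda_j$ forces $B\notin\Rp\cdot\id$, which is precisely the regime on which Definition \ref{definition:weakEmpirical} imposes its conditions; the carve-out for purely volumetric $B$ plays no role here because the hypothesis of $\bep$ is vacuous in that case.
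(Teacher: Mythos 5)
Your proof is correct and follows essentially the same route as the paper's: express the principal stresses as $\sigma_k=\beta_0+\beta_1\lambda_k^2+\beta_{-1}\lambda_k^{-2}$ via coaxiality/diagonalization and track the signs imposed by $\wetss$, the only cosmetic difference being that you factor $\sigma_i-\sigma_j$ into a product while the paper compares the two sums term by term. Your explicit remark that $\lambda_i>\lambda_j$ forces $B\notin\Rp\cdot\id$, so that the weak empirical inequalities are indeed available at that $B$, is a small point of rigor the paper leaves implicit.
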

\begin{proof}
	Due to the isotropy of $\sigmahat$, we can assume $B$ (and thus $\sigmahat(B)$) to be in diagonal form without loss of generality. Then the representation $\sigmahat(B) = \beta_0\.\id + \beta_1\.B + \beta_{-1}\.B^{-1}$ yields
	\begin{alignat*}{2}
		&&\diag(\sigma_1,\sigma_2,\sigma_3) &= \beta_0\.\id+\beta_1\diag(\lambda_1^2,\lambda_2^2,\lambda_3^2)+\beta_{-1}\diag(\lambda_1^2,\lambda_2^2,\lambda_3^2)^{-1}\\
		&\iff\quad& \sigma_k &= \beta_0+\beta_1\.\lambda_i+\beta_{-1}\.\lambda_k^{-1}\qquad\qquad\text{for all }\; k\in\{1,2,3\}\,.
	\end{alignat*}
	Now for arbitrary $i,j\in\{1,2,3\}$, if $\lambda_i>\lambda_j$, then the weak empirical inequalities $\beta_{-1}\leq 0$ and $\beta_1\geq 0$ ensure that $\beta_{-1}\.\lambda_i^{-2}\geq\beta_{-1}\.\lambda_j^{-2}$ and $\beta_1\.\lambda_i^2\geq\beta_1\.\lambda_j^2$ with one inequality being strict. Thus
	\[
		\sigma_i=\beta_0+\beta_1\.\lambda_i^2+\beta_{-1}\.\lambda_i^{-2}>\beta_0+\beta_1\.\lambda_j^2+\beta_{-1}\.\lambda_j^{-2}=\sigma_j\qquad\text{for all }\; i,j\in\{1,\cdots,n\}\,.\qedhere
	\]
\end{proof}

\subsection{Summary}
\label{section:mainSummary}
The above results can be summarized as follows (cf.\ \cite{dunn1984elastic,dunn1983certain}).
\begin{theorem}
	Let $\widehat\sigma\colon\PSym(3)\to\Sym(3)$ be an isotropic Cauchy stress response function with
	\begin{equation}
		\widehat\sigma(B) = \beta_0\.\id + \beta_1\.B + \beta_{-1}\.B^{-1}\qquad\text{for all }\; B\in\PSym(3)\,,
	\end{equation}
	where $\beta_{-1}$, $\beta_0$, $\beta_1$ are scalar-valued functions depending on the left Cauchy-Green tensor $B$. Let $\lambda_1^2$, $\lambda_2^2$, $\lambda_3^2$ and $\sigma_1$, $\sigma_2$, $\sigma_3$ denote the eigenvalues of $B$ and the corresponding eigenvalues of $\sigmahat(B)$, respectively.
	
	Then the following implications hold:
	
	\fbox{\parbox{.98\textwidth}{
		\[
			\etss\quad\implies\quad\wetss\quad\implies\quad\bep\qquad\begin{matrix}\implies&\quad\bicoax\quad&\iff\\[.5em]\xcancel\Longleftarrow&\quad\invert\quad&\implies\end{matrix}\quad\semi
		\]
	}}
	
	\bigskip
	
	{\setlength{\tabcolsep}{1em}
	\begin{tabularx}{\textwidth}{rX}
		$\etss\mathrm{\,:}$ & $\widehat\sigma$ satisfies the \emph{empirical inequalities}, i.e.\\[.5em]
		\phantom{$\wetss\mathrm{\,:}$}& $\beta_{-1}\leq 0$, $\beta_0\leq 0$, $\beta_1 > 0$ for all $B$.\\[1.2em]
	\end{tabularx}
	\begin{tabularx}{\textwidth}{rX}
		$\wetss\mathrm{\,:}$ & $\widehat\sigma$ satisfies the \emph{weak empirical inequalities}, i.e.\\[.5em]
		\phantom{$\wetss\mathrm{\,:}$}& $\beta_{-1}\leq 0$, $\beta_1\geq 0$ for all $B\notin\Rp\cdot\id$, and for each $B$ one of the two inequalities is strict.\\[1.2em]
	\end{tabularx}
	\begin{tabularx}{\textwidth}{rX}
		$\bep\mathrm{\,:}$ & $\widehat\sigma$ satisfies the \emph{strict Baker-Ericksen inequalities}, i.e.\\[.5em]
		\phantom{$\wetss\mathrm{\,:}$}& $\lambda_i>\lambda_j\ \implies \sigma_i>\sigma_j$ for all $i,j\in\{1,2,3\}$.\\[1.2em]
	\end{tabularx}
	\begin{tabularx}{\textwidth}{rX}
		$\bicoax\mathrm{\,:}$ & $\widehat\sigma$ is \emph{bi-coaxial}, i.e.\\[.5em]
		\phantom{$\wetss\mathrm{\,:}$}& every eigenvector of $B$ is an eigenvector of $\widehat\sigma(B)$ and vice versa.\\[1.2em]
	\end{tabularx}
	\begin{tabularx}{\textwidth}{rX}
		$\semi\mathrm{\,:}$ & $\widehat\sigma$ is \emph{semi-invertible}, i.e.\\[.5em]
		\phantom{$\wetss\mathrm{\,:}$}& there exist real-valued functions $\psi_0,\psi_1,\psi_2$ depending on the matrix invariants of $B$ such that $B = \psi_0\.\id + \psi_1\.\widehat\sigma(B) + \psi_2\.\widehat\sigma(B)^2$ for all $B\in\PSym(3)$.\\[1.2em]
	\end{tabularx}
	\begin{tabularx}{\textwidth}{rX}
		$\invert\mathrm{\,:}$ & $\widehat\sigma$ is \emph{invertible}, i.e.\\[.5em]
		\phantom{$\wetss\mathrm{\,:}$}& there exists an inverse function $\widehat\sigma\inv\colon\Sym(3)\to\PSym(3)$ with $B=\widehat\sigma\inv\bigl(\widehat\sigma(B)\bigr)$ for all $X\in\PSym(3)$ and $Y = \widehat\sigma\bigl(\widehat\sigma^{-1}(Y)\bigr)$ for all $Y\in\Sym(3)$.
	\end{tabularx}
	}
\end{theorem}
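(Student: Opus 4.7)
The plan is to observe that almost every arrow in the displayed chain has already been established earlier in the section, so the main work is reassembly plus three fresh items. Specifically, $\bicoax\Leftrightarrow\semi$ is the proposition proved immediately above, $\wetss\Rightarrow\bep$ is the preceding lemma, $\bep\Rightarrow\semi$ is Proposition \ref{corollary:BEimpliesSemiInvertibility}, and $\invert\Rightarrow\semi$ is the lemma on invertible isotropic functions. Since $\bicoax\Leftrightarrow\semi$, the arrow $\bep\Rightarrow\bicoax$ follows formally, but I would also record the direct argument. What then needs separate justification is $\etss\Rightarrow\wetss$, the direct arrow $\bep\Rightarrow\bicoax$, and the cancelled arrow $\invert\not\Rightarrow\bep$.

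For $\etss\Rightarrow\wetss$ I would compare the two definitions directly: $\etss$ requires $\beta_{-1}\le 0$, $\beta_0\le 0$, $\beta_1>0$ on all of $\PSym(3)$, so in particular $\beta_{-1}\le 0$ and $\beta_1\ge 0$ hold on the smaller set $\PSym(3)\setminus\Rp\cdot\id$, and the strict inequality $\beta_1>0$ supplies the strict inequality demanded by $\wetss$ at every $B$. For $\bep\Rightarrow\bicoax$ I would invoke Lemma \ref{lemma:isotropToBicoaxial}, which (given isotropy) reduces bi-coaxiality to the implication $\lambda_i\ne\lambda_j\Rightarrow\sigma_i\ne\sigma_j$; if $\lambda_i\ne\lambda_j$, then after relabelling one has $\lambda_i>\lambda_j$, so $\bep$ yields $\sigma_i>\sigma_j$ and in particular $\sigma_i\ne\sigma_j$.

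The main obstacle is producing a counterexample to $\invert\Rightarrow\bep$, because the paper's definition of $\invert$ demands that $\widehat\sigma$ be a bijection onto all of $\Sym(3)$, which rules out otherwise obvious candidates such as $B\mapsto B^{-1}$ (whose image is only $\PSym(3)\subsetneq\Sym(3)$). A clean choice that bypasses this difficulty is $\widehat\sigma(B)=-\ln B$, where $\ln$ is the principal matrix logarithm on $\PSym(3)$. This function is isotropic since $\ln(Q^T B Q)=Q^T(\ln B)Q$ for $Q\in\OO(3)$, and it is invertible in the strong sense of the definition because $\exp\colon\Sym(3)\to\PSym(3)$ is a global bijection, furnishing the two-sided inverse $Y\mapsto\exp(-Y)\in\PSym(3)$. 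Yet the eigenvalues of $\widehat\sigma(B)$ corresponding to the eigenvalues $\lambda_k^2$ of $B$ are $\sigma_k=-\ln\lambda_k^2=-2\ln\lambda_k$, so $\lambda_i>\lambda_j$ forces $\sigma_i<\sigma_j$, which flatly contradicts $\bep$ and proves that the converse arrow cannot hold.
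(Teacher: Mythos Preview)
Your proposal is correct, and for the positive implications it coincides with the paper's approach: the theorem is a summary, and the paper simply points back to the same earlier results you cite (the lemma $\wetss\Rightarrow\bep$, the proposition $\bicoax\Leftrightarrow\semi$, the lemma $\invert\Rightarrow\semi$, and Lemma~\ref{lemma:isotropToBicoaxial} for $\bep\Rightarrow\bicoax$), together with the trivial inclusion $\etss\Rightarrow\wetss$.

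The one genuine difference is your counterexample for $\invert\not\Rightarrow\bep$. The paper uses $\widehat\sigma(B)=\id-B$, declared \enquote{invertible} in the discussion following Proposition~\ref{corollary:BEimpliesSemiInvertibility}. You instead use $\widehat\sigma(B)=-\ln B$, and your reasoning for rejecting simpler candidates is well taken: the theorem's definition of $\invert$ asks for a two-sided inverse $\Sym(3)\to\PSym(3)$, i.e.\ surjectivity onto all of $\Sym(3)$, yet $B\mapsto\id-B$ only hits symmetric matrices whose eigenvalues are strictly below~$1$. Your map $-\ln$, by contrast, is a genuine bijection $\PSym(3)\to\Sym(3)$ with inverse $Y\mapsto\exp(-Y)$, so it satisfies the stated definition exactly while violating $\bep$ as you show. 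In this respect your argument is actually tighter than the paper's; the paper's example suffices only if one reads \enquote{invertible} informally as injective (or as bijective onto its image), which is weaker than what the theorem statement spells out.
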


For typical material models considered in isotropic nonlinear Cauchy elasticity, the particularly important Cauchy stress response function $B=FF^T\mapsto\sigmahat(B)$ is usually not an invertible mapping. However, it is often required to find the stretch or strain \emph{induced by} a given Cauchy stress $\sigma$ or, if not uniquely determined, to deduce the \emph{general form} of all $B\in\PSym(3)$ with $\sigmahat(B)=\sigma$. This can be considered one of the main purposes behind the notion of semi-invertibility: although it is often difficult to find an explicit representation of the coefficient functions $\beta_i$ (and thus of the functions $\psi_i$) for a given constitutive law of elasticity, the representation given by eq.\ \eqref{eq:semiInvertible} still provides a certain amount of information about $B$ if $\sigma=\sigmahat(B)$ is known \cite{destrade2012,batra1976deformation,mihai2013numerical}.

In terms of isotropic nonlinear elasticity, the observation (cf.\ Corollary \ref{corollary:distinct}) that $\sigmahat(B)$ and $B$ are bi-coaxial if $\sigmahat(B)$ has only simple eigenvalues (but not in general) can be stated as follows: although each principal axis of (Eulerian) strain must be a principal axis of (Cauchy) stress, the reverse must not hold in general unless the principal stresses are pairwise distinct or additional constraints on the constitutive law are assumed to hold.\footnote{%
	In a recent contribution \cite{thiel2018shear}, this consequence of the distinctness of principal stresses is further discussed and utilized to verify and generalize a statement by Destrade et al.\ \cite{destrade2012} on the deformations corresponding to simple shear Cauchy stresses. Similar work can be found in an early paper by Moon and Truesdell \cite{moon1974interpretation} as well as in a more recent article by Mihai and Goriely \cite{mihai2011positive}.%
}
Among the constitutive requirements which ensure this bi-coaxiality of stress and strain, i.e.\ the semi-invertibility of $\sigmahat$, are the (weak) empirical inequalities, although the (weaker) strict Baker-Ericksen inequalities are sufficient as well.

Unfortunately, the condition of semi-invertibility is difficult to verify directly for a given isotropic function. Therefore, if this property is required to obtain particular results, it is common to assume a (stronger) requirement for the constitutive law to hold which implies semi-invertibility. This has led many authors to restrict their considerations to constitutive laws which satisfy Truesdell's empirical inequalities, thereby excluding a large number of elastic models which are widely used in applications throughout different fields of research. However, in many cases encountered in the literature \cite{destrade2012,batra1976deformation,mihai2013numerical}, the proof of a result relies only on the semi-invertibility (i.e.\ the bi-coaxiality) of the Cauchy stress response, not on any further properties obtained from the empirical inequalities, and could therefore be formulated in a much more general way (for example, under the condition of the bi-coaxiality of the Cauchy stress response, the strict Baker-Ericksen inequalities or the weak empirical inequalities introduced here).

%
%
%
\section{Hyperelasticity}
The basic assumption of the hyperelastic framework is that the elastic stress response is induced by an \emph{energy potential} $W$, i.e.\ that there exists a function $W\col\GLp(3)\to\R$ such that the Cauchy stress $\sigma(F)$ corresponding to a deformation gradient $F\in\GLp(3)$ is given by $\sigma(F)=\frac{1}{\det F}\.DW(F)\.F^T$.

In the isotropic case (i.e.\ in the case that $B$ is coaxial to $\sigmahat(B)$, see \cite{vianello1996optimization}), the energy potential can be expressed as $W(F)=W(I_1,I_2,I_3)$ in terms of the principal invariants of $B=FF^T$. This representation allows for a direct computation of the coefficients $\beta_i$ \cite{mihai2017characterize}:
\begin{align}
	\beta_0=\frac{2}{\sqrt{I_3}}\left(I_2\.\frac{\partial W}{\partial I_2}+I_3\.\frac{\partial W}{\partial I_3}\right)\,,\qquad\beta_1=\frac{2}{\sqrt{I_3}}\.\frac{\partial W}{\partial I_1}\,,\qquad\beta_{-1}=-2\sqrt{I_3}\.\frac{\partial W}{\partial I_2}\,.\label{eq:CauchyHyper}
\end{align}
Since the invariants $I_1,I_2,I_3$ are positive for any $B\in\PSym(3)$, the weak empirical inequalities take a particularly simple form in this case.
\begin{proposition}\label{lemma:hyperWETSS}
	Let the Cauchy stress response function $\sigmahat$ be induced by the isotropic energy potential $W\col\GLp(3)\to\R$. Then the weak empirical inequalities $\wetss$ are satisfied if and only if for every $B\in\PSym(3)\setminus\Rp\cdot\id$,
	\begin{align}\label{eq:hyperWETSS}
		\frac{\partial W}{\partial I_1}(I_1,I_2,I_3)\geq 0\qquad\text{and}\qquad\frac{\partial W}{\partial I_2}(I_1,I_2,I_3)\geq 0
	\end{align}
	with one inequality being strict.
\end{proposition}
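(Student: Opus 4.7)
The plan is to reduce the statement to a direct substitution using the explicit formulas \eqref{eq:CauchyHyper} relating the response coefficients $\beta_i$ to the partial derivatives of $W$ with respect to the principal invariants. Since $B\in\PSym(3)$ implies $I_3=(\det F)^2>0$, both factors $\frac{2}{\sqrt{I_3}}$ and $-2\sqrt{I_3}$ appearing in \eqref{eq:CauchyHyper} have strictly determined signs (positive and negative, respectively), so passing between the $\beta_i$-inequalities and the $\partial W/\partial I_i$-inequalities is essentially a matter of dividing out a positive factor.

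First I would record the two equivalences
\[
	\beta_1\geq 0 \;\iff\; \frac{\partial W}{\partial I_1}\geq 0\,,\qquad \beta_{-1}\leq 0 \;\iff\; \frac{\partial W}{\partial I_2}\geq 0\,,
\]
together with the analogous strict versions, which follow immediately from \eqref{eq:CauchyHyper} because $\frac{2}{\sqrt{I_3}}>0$ and $-2\sqrt{I_3}<0$. Substituting these into Definition \ref{definition:weakEmpirical}, the condition $\wetss$ (which asks that for every $B\in\PSym(3)\setminus\Rp\cdot\id$, both $\beta_{-1}\leq 0$ and $\beta_1\geq 0$ hold with at least one strict) translates verbatim into \eqref{eq:hyperWETSS} together with the requirement that at least one of the two inequalities there is strict.

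There is really no obstacle here; the only small subtlety worth spelling out is that the coefficient $\beta_0$ plays no role in $\wetss$, so the term $I_2\frac{\partial W}{\partial I_2}+I_3\frac{\partial W}{\partial I_3}$ appearing in $\beta_0$ can and should be ignored (in contrast to the full empirical inequalities $\etss$, where controlling $\beta_0$ would require additional sign information about $\partial W/\partial I_3$). I would briefly remark on this asymmetry to motivate why the condition \eqref{eq:hyperWETSS} involves only $\partial W/\partial I_1$ and $\partial W/\partial I_2$, and otherwise present the equivalence as a one-line substitution.
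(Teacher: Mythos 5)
Your proposal is correct and matches the paper's reasoning exactly: the paper also treats this as an immediate consequence of the formulas \eqref{eq:CauchyHyper}, using only that $\tfrac{2}{\sqrt{I_3}}>0$ and $-2\sqrt{I_3}<0$ for $B\in\PSym(3)$ so that the sign conditions on $\beta_1$ and $\beta_{-1}$ translate directly into \eqref{eq:hyperWETSS}. Nothing is missing.
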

The additional condition $\beta_0\leq 0$ of the empirical inequalities $\etss$ is equivalent to
\begin{equation}
\label{eq:empiricalRedundantConditionHyper}
	0\geq\frac{2}{\sqrt{I_3}}\left(I_2\.\frac{\partial W}{\partial I_2}+I_3\.\frac{\partial W}{\partial I_3}\right)\qquad\iff\qquad \underbrace{-\frac{\partial W}{\partial I_2}}_{\leq 0}\geq\underbrace{\frac{I_3}{I_2}}_{\geq 0}\frac{\partial W}{\partial I_3}\,.
\end{equation}
Therefore, $\etss$ implies $\frac{\partial W}{\partial I_3}<0$ or $\frac{\partial W}{\partial I_3}=0$, the latter being satisfied if the energy function is independent of $I_3$.
\begin{remark}
\label{remark:weakEmpiricalInterpretation}
	Unlike \eqref{eq:empiricalRedundantConditionHyper}, the inequalities \eqref{eq:hyperWETSS} are accessible to an intuitive geometric interpretation (see\ Figure \ref{fig:WETSS}):
	For a deformation gradient $F\in\GLp(3)$, let
	\begin{equation}
		l(F) = \sqrt{\frac{1}{\abs{S^2}}\,\smash{\int\limits_{\xi\in S^2}}\vphantom{\int} \norm{F\.\xi}^2\,\dS^2}
	\end{equation}
	denote the (Euclidean) average length of unit vectors deformed by $F$, where $S^2\subset\R^3$ is the unit sphere. Then \cite{bavzant1986efficient,lubarda1993damage}
	\begin{equation}
	\label{eq:averageLengthRelatedToTraceB}
		l(F)^2 = \frac{1}{\abs{S^2}}\,\int\limits_{\xi\in S^2} \iprod{B\.\xi,\xi}\,\dS^2 =\frac13\,\tr(B) = \frac13\,I_1\,.
	\end{equation}
	Observe that
	\[
		l(F)^2 = \frac13\.\tr(B) = \frac13\.(\lambda_1^2+\lambda_2^2+\lambda_3^2) \geq (\lambda_1^2\.\lambda_2^2\.\lambda_3^2)^{\frac13} = \det(F)^{\afrac23}
	\]
	due to the inequality of arithmetic and geometric means, where $\lambda_1,\lambda_2,\lambda_3$ are the singular values of $F$, and that equality holds if and only if $\lambda_1=\lambda_2=\lambda_3$ or, equivalently, $B=\det(F)^{\afrac23}\cdot\id$. Thus, for all $d>0$,
	\begin{equation}
	\label{eq:averageLengthConstrainedMin}
		\min_{\substack{F\in\GLp(3)\\\det F = d}} l(F) = l(d^{\afrac13}\cdot\id) = l(d^{\afrac13}\cdot Q)
	\end{equation}
	for any proper rotation $Q\in\SO(3)$.
	
	Under the constraint of a fixed determinant $\det F = d$, it is physically reasonable to assume that a purely volumetric deformation (i.e.\ a conformal mapping of the form $d^{\afrac13}\cdot Q$ with $Q\in\SO(3)$) is \emph{energetically optimal} since it involves no isochoric deformation (pure shape change) of the body in addition to the prescribed change of volume.
	
	If the average deformed length $l(F)$ is considered to be a characteristic quantity of a deformation $F$, it is therefore plausible to assume that it is \emph{energetically preferable} for $l(F)$ to remain close to $l(d^{\afrac13}\cdot Q)$ under the constraint $\det F = d$.
	
	According to \eqref{eq:averageLengthConstrainedMin}, this implies that a smaller value of $l(F)$ should correspond to a lower elastic energy. Since, due to \eqref{eq:averageLengthRelatedToTraceB}, the average length $l(F)$ increases monotonically with $I_1$, these considerations can be expressed by the requirement $\frac{\partial W}{\partial I_1}\geq 0$; note that taking the partial derivative only with respect to the first invariant presupposes that the determinant $\det F$ remains fixed.
	
	Similarly, the inequality $\frac{\partial W}{\partial I_2}\geq 0$ can be motivated by observing that
	\[
		\frac13\,I_2 = \frac13\,\tr(\Cof B) = \frac{1}{\abs{S^2}}\,\int\limits_{\xi\in S^2} \norm{(\Cof F)\.\xi}^2\,\dS
	\]
	measures the average area of unit area plane sections deformed by $F$.
	
	Note carefully that this interpretation cannot be \emph{strictly} applied to the special case $B\in\Rp\cdot\id$ representing the \enquote{energetically ideal} boundary case, which corresponds to a purely volumetric (conformal) deformation $F\in\Rp\cdot\SO(3)$ and is excluded from the requirements in Definition \ref{definition:weakEmpirical}, since $l$ reaches its minimum (for fixed determinant) at these deformations.
\end{remark}
\begin{figure}[h!]
  	\centering
	\begin{tikzpicture}[scale=.84]%
	%
		\def\lone{1.75} 
		\def\ltwo{1/sqrt(\lone)} 
		\def\R{2.5}
		\def\angEl{21}
		\def\angElTwo{24.5}
		\def\shearAmount{.7}
		\def\angRot{14}
		\def\distanceFactor{1.4}
		\newcommand{\gridLines}{7}
		\newcommand{\gridOpacity}{0.35}
		\def\cofFnSplit{0.49}
		\definecolor{sphereColor}{rgb}{.924,.987,.924}
		\newcommand{\sphereOpacity}{.259}
		\newcommand{\circleOpacity}{.21}
		\definecolor{planeColor}{rgb}{0,0,.91}
		\newcommand{\planeOpacity}{.28}
		\colorlet{cofColor}{blue!70!black}
		%
		\def\pointNodeSize{2.1}
		\tikzset{pointNodestyle/.style={inner sep = 0, minimum size=\pointNodeSize, draw, circle, very thin, shade, shading=ball}}
		\makeatletter
		\pgfdeclareradialshading[tikz@ball]{ball}{\pgfqpoint{-10bp}{10bp}}{%
		 color(0bp)=(tikz@ball!10!white);
		 color(9bp)=(tikz@ball!50!white);
		 color(18bp)=(tikz@ball!77!black);
		 color(25bp)=(tikz@ball!60.2!black);
		 color(50bp)=(black)}
		\makeatother
		\newcommand\pgfmathsinandcos[3]{
			\pgfmathsetmacro#1{sin(#3)}
			\pgfmathsetmacro#2{cos(#3)}
		}
		\newcommand\LongitudePlane[3][currentplane]{
		  \pgfmathsinandcos\sinEl\cosEl{#2} 
		  \pgfmathsinandcos\sint\cost{#3}
		  \tikzset{#1/.style={cm={\cost,\sint*\sinEl,0,\cosEl,(0,0)}}}
		}
		\newcommand\LatitudePlane[3][currentplane]{
			\pgfmathsinandcos\sinEl\cosEl{#2}
			\pgfmathsinandcos\sint\cost{#3}
			\pgfmathsetmacro\yshift{\cosEl*\sint}
			\tikzset{#1/.style={cm={\cost,0,0,\cost*\sinEl,(0,\yshift)}}} 
		}
		\newcommand\DrawLongitudeCircle[2][1]{
			\LongitudePlane{\angEl}{#2}
			\tikzset{currentplane/.prefix style={scale=#1}}
			\pgfmathsetmacro\angVis{atan(sin(#2)*cos(\angEl)/sin(\angEl))} 
			\draw[currentplane, color=black, opacity=\circleOpacity] (\angVis:1) arc (\angVis:\angVis+180:1);
		}
		\newcommand\DrawLatitudeCircle[2][1]{
			\LatitudePlane{\angEl}{#2}
			\tikzset{currentplane/.prefix style={scale=#1}}
			\pgfmathsetmacro\sinVis{sin(#2)/cos(#2)*sin(\angEl)/cos(\angEl)}
			\pgfmathsetmacro\angVis{asin(min(1,max(\sinVis,-1)))}
			\draw[currentplane, color=black, opacity=\circleOpacity] (\angVis:1) arc (\angVis:-\angVis-180:1);
		}
		\newcommand\DrawFilledLongitudeCircle[2][1]{
			\LongitudePlane{\angEl}{#2}
			\tikzset{currentplane/.prefix style={scale=#1}}
			\pgfmathsetmacro\angVis{atan(sin(#2)*cos(\angEl)/sin(\angEl))}
			\fill[currentplane,color=blue,opacity=\planeOpacity] (\angVis:1) arc (\angVis:360:1); 
			\draw[currentplane,color=blue,opacity=1, thick] (\angVis:1) arc (\angVis:360:1);
		}
		\newcommand\DrawFilledLatitudeCircle[2][1]{
			\LatitudePlane{\angEl}{#2}
			\tikzset{currentplane/.prefix style={scale=#1}}
			\pgfmathsetmacro\sinVis{sin(#2)/cos(#2)*sin(\angEl)/cos(\angEl)}
			\pgfmathsetmacro\angVis{asin(min(1,max(\sinVis,-1)))}
			\fill[currentplane, color=planeColor, opacity=\planeOpacity] (\angVis:1) arc (\angVis:720:1);
			\draw[currentplane, color=planeColor, opacity=1, thick] (\angVis:1) arc (\angVis:-\angVis-180:1);
			\draw[currentplane, color=planeColor, dashed, opacity=1, thick] (-\angVis+180:1) arc (-\angVis+180:\angVis:1);
		}
		%
		%
		%
		\begin{scope}
			%
			\DrawFilledLatitudeCircle[\R]{0}
			%
			\node[pointNodestyle, ball color = black, minimum size = {1.5*\pointNodeSize}] (centerNode) at (0,0){};
			%
			\draw[thick, ->] (centerNode) -- (\R,0) node (a) {};
			\draw[thick, ->] (centerNode) -- (0,{\R*cos(\angEl)}) node (n) {};
			%
			\filldraw[color = sphereColor, ball color=sphereColor, opacity = \sphereOpacity] (0,0) ellipse ({\R} and \R);
			\foreach \t in {-80,-70,...,80} {\DrawLatitudeCircle[\R]{\t}}
			\foreach \t in {-5,-25,...,-175} {\DrawLongitudeCircle[\R]{\t+10}}
			\draw[color=black, opacity=\circleOpacity] (0,0) ellipse ({\R} and \R);
			%
			\node[right, color=black] at (a){$a$};
			\node[below right, color=black] at (n){$n$};
		\end{scope}
		\begin{scope}[cm={%
		\lone*cos(\angRot),-\lone*sin(\angRot),%
		\ltwo*(sin(\angRot)+\shearAmount*cos(\angRot)),\ltwo*(cos(\angRot)-\shearAmount*sin(\angRot)),%
		({\distanceFactor*(\R+\lone*\R)},0)}]
			%
			\def\angEl{\angElTwo}
			%
			\DrawFilledLatitudeCircle[\R]{0}
			%
			\node[pointNodestyle, ball color = black, minimum size = {1.5*\pointNodeSize}] (centerNode) at (0,0){};
			%
			\draw[thick, ->] (centerNode) -- (\R,0) node (Fa) {};
			\draw[thick, color=cofColor] (centerNode) -- ({\cofFnSplit*(-\shearAmount/sqrt(\lone)*\R*cos(\angEl))},{\cofFnSplit*\lone*\R*cos(\angEl)}) node[inner sep = 0, minimum size=1.75, circle, fill, color=black, opacity=.63] {};
			\draw[->,thick,dashed] (centerNode) -- (0,{\R*cos(\angEl)}) node (Fn) {};
			%
			\filldraw[color = sphereColor, ball color=sphereColor, opacity = \sphereOpacity] (0,0) ellipse ({\R} and \R);
			\foreach \t in {-80,-70,...,80} {\DrawLatitudeCircle[\R]{\t}}
			\foreach \t in {-5,-25,...,-175} {\DrawLongitudeCircle[\R]{\t+10}}
			\draw[color=black, opacity=\circleOpacity] (0,0) ellipse ({\R} and \R);
			%
			\draw[thick, ->, color=cofColor] ({\cofFnSplit*(-\shearAmount/sqrt(\lone)*\R*cos(\angEl))},{\cofFnSplit*\lone*\R*cos(\angEl)}) -- ({-\shearAmount/sqrt(\lone)*\R*cos(\angEl)},{\lone*\R*cos(\angEl)}) node (CofFn) {};
			%
			\node[below right, color=black] at (Fa) {$Fa$};
			\node[above, color=cofColor] at (CofFn) {$(\Cof F)\.n$};
			\node[above right, color=black] at (Fn) {$Fn$};
		\end{scope}
		%
		\draw[->,very thick] (1.75,{\R-.35}) to[bend left] node[above] {\large $F$} (8.05,{\R-.35});
	\end{tikzpicture}%
	\caption{The change in length and area induced by the deformation gradient $F$ are given by $\norm{Fa}$ and $\norm{(\Cof F)\.n}$, respectively, where $n$ is a unit normal to the deformed plane.}\label{fig:WETSS}
\end{figure}
The condition \eqref{eq:empiricalRedundantConditionHyper} which distinguishes the weak empirical inequalities from the classical ones, in addition to being redundant for many applications, also cannot be satisfied for certain (physically reasonable) classes of  energy potentials. Consider the purely volumetric homogeneous deformation $\varphi(x)=\sqrt\lambda\.x$ with $\lambda\geq 1$. Of course, it is a reasonable assumption that the energy function $W(F)$ should be monotone with respect to $\lambda$, i.e.\ $\ddd{W}{\lambda}>0$. For $F=\grad\varphi=\sqrt{\lambda}\.\id$, the principal invariants of $B=FF^T$ are given by
\begin{align}
	I_1=\tr(B)=3\.\lambda\,,\quad I_2=\tr(\Cof B)=3\.\lambda^2\,,\quad I_3=\det(B)=\lambda^3\,.\label{eq:inv}
\end{align}
Expressing the energy $W$ by
\[
	W(I_1,I_2,I_3)=\widetilde W(J_1,J_2,J_3)=\widetilde W(I_1I_3^{-\frac{1}{3}},I_2I_3^{-\frac{2}{3}},I_3)\,,
\]
the condition $\ddd{W}{\lambda}>0$ can be stated as
\[
	0<\ddd{W}{\lambda}W(3\.\lambda,3\.\lambda^2,\lambda^3)=\ddd{W}{\lambda}\widetilde W(3,3,\lambda^3)=\frac{\partial\widetilde W}{\partial J_3}\.\underbrace{3\.\lambda^2}_{>0}\qquad\text{for all }\;\lambda>1\,.
\]
Truesdell's condition \eqref{eq:empiricalRedundantConditionHyper} on $\beta_0$ implies
\[
	0\geq\frac{\partial W}{\partial I_3}=\dd{I_3}\widetilde W(I_1I_3^{-\frac{1}{3}},I_2I_3^{-\frac{2}{3}},I_3)=-\frac{1}{3}\.I_1\.\frac{\partial \widetilde W}{\partial J_1}\.I_3^{-\frac{4}{3}}-\frac{2}{3}\.I_2\.\frac{\partial \widetilde W}{\partial J_2}\.I_3^{-\frac{5}{3}}+\frac{\partial\widetilde W}{\partial J_3}\,.
\]
For the above volumetric deformation, in particular, \eqref{eq:empiricalRedundantConditionHyper} implies
\begin{align}
	J_3\.\frac{\partial\widetilde W}{\partial J_3}\leq\frac{\partial \widetilde W}{\partial J_1}+2\.\frac{\partial \widetilde W}{\partial J_2}\qquad\text{for all}\quad\lambda>1\quad\text{with}\quad J_1=J_2=3\,,\quad J_3=\lambda^3\,.\label{eq:beta0Volumetric}
\end{align}
However, most elastic energy functions commonly considered in applications do not satisfy this condition. For example, consider the class of energy functions with an additive \emph{isochoric-volumetric split}, i.e.\ an energy $W$ the form
\[
	W(I_1,I_2,I_3)=\Wiso(I_1I_3^{-\frac{1}{3}},I_2I_3^{-\frac{2}{3}})+f(I_3)
\]
with a scalar-valued function $f:\Rp\to\R$ that satisfies $f'(1)=0$ to ensure a stress-free reference configuration. Then \eqref{eq:beta0Volumetric} would imply
\[
	\lambda^3\.f'(\lambda^3)
	\leq\frac{\partial}{\partial J_1}\Wiso(3,3)+2\.\frac{\partial}{\partial J_2}\Wiso(3,3)=\textit{const.}\qquad\text{for all }\;\lambda>1
\]
which, in turn, implies that $f'$ and hence the hydrostatic pressure $\tr\sigma$ is bounded above for large volumetric strains. Condition \eqref{eq:empiricalRedundantConditionHyper} thereby excludes isochoric-volumetrically split energy functions which exhibit physically plausible behavior.

%
%
%
%
\subsection{The quadratic Hencky energy}
\label{section:henckyExample}
As an example for the advantages offered by the weak empirical inequalities compared to Truesdell's (full) emprical inequalities, we consider the classical \emph{quadratic-logarithmic Hencky energy} $\WH\colon\GLp(3)\to\R$ with \cite{Hencky1928,Hencky1929,agn_neff2013hencky,agn_neff2015geometry}
\begin{equation}
	\WH(F) = \mu\,\norm{\dev_3 \log U}^2 + \frac \kappa 2\left(\tr(\log U)\right)^2 = \mu\,\norm{\log U}^2 + \frac \Lambda 2\left(\tr(\log U)\right)^2\,,
\end{equation}
where $U=\sqrt{F^TF}$ denotes the right Biot stretch tensor, $\dev_3 X=X-\frac{1}{3}\.\tr(X)$ is the deviatoric part of $X\in\R^{3\times3}$, $\mu>0$ is the shear modulus, $\kappa>0$ is the bulk modulus and $\Lambda$ is the first Lam\'e parameter with $3\Lambda+2\mu\geq 0$. While it is well known that the Cauchy stress response induced by $\WH$ does not satisfy $\etss$, we will show here that it does fulfil $\wetss$. Our proof is based on the so-called \emph{sum of squared logarithms inequality} (SSLI) \cite{birsan2013sum,agn_lankeit2014minimization}.
\begin{proposition}\label{def:ssli}
	Let $a_1$, $a_2$, $a_3$, $b_1$, $b_2$, $b_3\in\R^+$ such that
	\begin{align}
		a_1+a_2+a_3\leq b_1+b_2+b_3\,,\qquad a_1\.a_2+a_1\.a_3+a_2\.a_3\leq b_1\.b_2+\.b_1\.b_3+b_2\.b_3\,,\qquad a_1\.a_2\.a_3=b_1\.b_2\.b_3\,.
	\end{align}
	Then
	\begin{align}
	\label{eq:ssliImpliedInequality}
		(\log a_1)^2+(\log a_2)^2+(\log a_3)^2\leq (\log b_1)^2+(\log b_2)^2+(\log b_3)^2\,,
	\end{align}
	and strict inequality holds if $(a_1,a_2,a_3)\neq(b_1,b_2,b_3)$.
\end{proposition}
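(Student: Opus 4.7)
The strategy is to regard $F(a_1,a_2,a_3)\colonequals (\log a_1)^2+(\log a_2)^2+(\log a_3)^2$ as a function $F(e_1,e_2,e_3)$ of the elementary symmetric polynomials of its arguments, and to show that $F$ is nondecreasing in $e_1$ and in $e_2$ separately when $e_3$ is held fixed. By continuity, it suffices to treat the generic case in which $a_1,a_2,a_3$ are pairwise distinct (and similarly for $b_1,b_2,b_3$); the remaining configurations are recovered by a limiting argument.

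Since $e_3(a)=e_3(b)$, I would then connect $(e_1(a),e_2(a))$ to $(e_1(b),e_2(b))$ by a path $\gamma(t)$, $t\in[0,1]$, along which $e_3$ is held constant and both $e_1(t)$ and $e_2(t)$ are nondecreasing, and which lies entirely inside the image of the symmetric-polynomial map restricted to positive real triples. Such a path exists because $\{(c_1,c_2,c_3)\in(\R^+)^3 : c_1c_2c_3=e_3(a)\}$ is path-connected and its image under the symmetric-polynomial map is therefore path-connected as well.

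The key computation is the following. Writing $P(x)=x^3-e_1 x^2+e_2 x-e_3=\prod_{i=1}^3(x-a_i)$ and differentiating implicitly at fixed $e_3$ yields
\[
	\frac{\partial a_i}{\partial e_1}=\frac{a_i^2}{P'(a_i)}\,,\qquad \frac{\partial a_i}{\partial e_2}=-\frac{a_i}{P'(a_i)}\,,\qquad P'(a_i)=\prod_{j\neq i}(a_i-a_j)\,.
\]
A short calculation identifies the partial derivatives of $F$ with second-order divided differences:
\[
	\tfrac12\,\frac{\partial F}{\partial e_1}=\sum_{i=1}^3\frac{a_i\log a_i}{\prod_{j\neq i}(a_i-a_j)}=[a_1,a_2,a_3]\.\phi_1\,,\qquad -\tfrac12\,\frac{\partial F}{\partial e_2}=\sum_{i=1}^3\frac{\log a_i}{\prod_{j\neq i}(a_i-a_j)}=[a_1,a_2,a_3]\.\phi_2\,,
\]
with $\phi_1(x)=x\log x$ and $\phi_2(x)=\log x$. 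The mean-value theorem for divided differences then gives $[a_1,a_2,a_3]\.\phi_k=\phi_k''(\xi_k)/2$ for some $\xi_k$ in the convex hull of $\{a_1,a_2,a_3\}\subset \R^+$. Since $\phi_1''(x)=1/x>0$ and $-\phi_2''(x)=1/x^2>0$ throughout $\R^+$, both $\partial F/\partial e_1$ and $\partial F/\partial e_2$ are strictly positive. Integrating these along $\gamma$ yields $F(a)\leq F(b)$, and whenever $(a_1,a_2,a_3)\neq(b_1,b_2,b_3)$ as multisets, at least one of the increments $\int \dot e_k\,\mathrm{d}t$ is strictly positive, producing the strict inequality.

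The main obstacle I anticipate is the existence of the path $\gamma$: one must carefully verify that $e_1$ and $e_2$ can be made simultaneously monotone while keeping the parameters in the (closed) region where the cubic $P$ has three positive real roots, so that the implicit differentiation and divided-difference representations remain valid along the integration. Once this is arranged, the remaining steps are routine: implicit differentiation, the identity $P'(a_i)=\prod_{j\neq i}(a_i-a_j)$, and the two elementary convexity/concavity facts $(x\log x)''>0$ and $(\log x)''<0$.
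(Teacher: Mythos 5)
First, a remark on the comparison itself: the paper does not prove this proposition. The sum-of-squared-logarithms inequality is imported by citation (\cite{birsan2013sum,agn_lankeit2014minimization}) immediately before the statement, so there is no in-paper argument to measure your proposal against; what you have written is in the spirit of the known proofs, which indeed establish monotonicity of $\norm{\log X}^2$ in $I_1$ and $I_2$ at fixed $I_3$. Your local computation is correct: the implicit derivatives $\partial a_i/\partial e_1=a_i^2/P'(a_i)$ and $\partial a_i/\partial e_2=-a_i/P'(a_i)$, the identification of $\tfrac12\,\partial F/\partial e_1$ and $-\tfrac12\,\partial F/\partial e_2$ with the second divided differences of $x\log x$ and $\log x$, and the mean value theorem giving $\partial F/\partial e_1=1/\xi_1>0$ and $\partial F/\partial e_2=1/\xi_2^2>0$ are all sound when the roots are pairwise distinct. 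You are also right to read the strictness clause as \enquote{distinct as multisets}; taken literally for ordered triples the strict statement would be false (permute the $a_i$).

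The genuine gap is exactly the one you flag and then leave open: the existence of the monotone path $\gamma$. Path-connectedness of the image of $\{c_1c_2c_3=e_3\}$ under the symmetric-polynomial map gives \emph{some} path from $(e_1(a),e_2(a))$ to $(e_1(b),e_2(b))$, but not one along which both coordinates are nondecreasing, and without monotonicity the positivity of the partial derivatives yields nothing. The gap is closable, but it requires an argument about the shape of the admissible region $D_d=\{(e_1,e_2): x^3-e_1x^2+e_2x-d \text{ has three positive real roots}\}$: its boundary is the double-root locus $e_1=2t+d/t^2$, $e_2=t^2+2d/t$, which splits at the triple-root corner $t=d^{1/3}$ into two branches on each of which $e_1$ and $e_2$ are \emph{simultaneously} strictly monotone, and each slice $\{e_1=c\}\cap D_d$ is an interval (it is the continuous image of a connected level set of $\lambda_1\lambda_2\lambda_3$ on the triangle $\{\textstyle\sum\lambda_i=c,\ \lambda_i>0\}$, whose only critical point is the center). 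From these two facts a componentwise nondecreasing path always exists: move right, follow the lower boundary branch where necessary, then move up. A second, smaller repair is needed where such a path meets the double-root locus, since there $P'(a_i)=0$ and the individual roots are not differentiable in $(e_1,e_2)$; either perturb the path into the interior (possible because the boundary branches are strictly monotone), or use that the \emph{symmetric} function $F$ extends smoothly across the confluent locus and that divided differences pass to the confluent limit. With these two additions your argument becomes a complete proof.
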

With $a_i$ and $b_i$ as the eigenvalues of two symmetric matrices $X,\widetilde X\in\PSym(3)$, the SSLI can be stated in terms of the matrix invariants as
\begin{align}
	\left.\begin{matrix}
		I_1(X)\leq I_1(\widetilde X)\\I_2(X)\leq I_2(\widetilde X)\\I_3(X)= I_3(\widetilde X)
	\end{matrix}\right\}\qquad\implies \qquad \norm{\log X}^2\leq \norm{\log\widetilde X}^2\,.
\end{align}
The SSLI thereby can be interpreted as the monotonicity of the logarithmic strain measure $X\mapsto\norm{\log X}^2$ with respect to the first two matrix invariants, which in turn is directly related to the weak empirical inequalities.
\begin{theorem}\label{theo:logUSemi}
	The Cauchy stress response induced by the energy function $W\col\GLp(3)\to\R$ with
	\begin{align}
		W(F)=\norm{\log U}^2\qquad\text {for all }\; F\in\GLp(3)\,,
	\end{align}
	where $U=\sqrt{F^TF}\Symp(3)$ denotes the right Biot stretch tensor, satisfies the weak empirical inequalities.
\end{theorem}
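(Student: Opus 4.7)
The plan is to apply Proposition~\ref{lemma:hyperWETSS}, which recasts $\wetss$ for a hyperelastic response as the requirement that the partial derivatives $\partial \widetilde W/\partial I_1$ and $\partial \widetilde W/\partial I_2$ be non-negative on $\PSym(3)\setminus\Rp\cdot\id$, with at least one of them strictly positive at each such $B$, where $\widetilde W(I_1,I_2,I_3)$ is the representation of $W$ in the principal invariants of $B=FF^T$. The sum of squared logarithms inequality (Proposition~\ref{def:ssli}) will deliver the non-strict inequalities essentially for free, and the strict part will then follow from a direct inspection of the Cauchy stress induced by the Hencky law.

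For the reformulation, with the singular values $\lambda_1,\lambda_2,\lambda_3$ of $F$ and $a_i=\lambda_i^2$ the eigenvalues of $B$, we have $W=\sum_i(\log\lambda_i)^2=\tfrac14\sum_i(\log a_i)^2$, which is symmetric in the $a_i$ and thus depends on $B$ only through $I_1,I_2,I_3$, yielding a smooth $\widetilde W$ on the (open) set of invariants of matrices with distinct positive eigenvalues. Fix such a $B$; for small $\varepsilon>0$, let $b_1,b_2,b_3$ be the real positive roots of $x^3-(I_1+\varepsilon)x^2+I_2\.x-I_3=0$, which exist by continuity of polynomial roots. The triples $(a_i)$ and $(b_i)$ satisfy the hypotheses of SSLI, so $\sum_i(\log b_i)^2\geq\sum_i(\log a_i)^2$, i.e.\ $\widetilde W(I_1+\varepsilon,I_2,I_3)\geq\widetilde W(I_1,I_2,I_3)$. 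Dividing by $\varepsilon$ and letting $\varepsilon\to 0^+$ gives $\partial \widetilde W/\partial I_1\geq 0$; the analogous argument with $I_2$ perturbed gives $\partial \widetilde W/\partial I_2\geq 0$. A continuity argument extends these bounds to matrices with a repeated but not triple eigenvalue, and by \eqref{eq:CauchyHyper} they translate to $\beta_1\geq 0$ and $\beta_{-1}\leq 0$.

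For the strictness required by $\wetss$, I would bypass a delicate derivative argument by exploiting the especially clean form of the Hencky Cauchy stress. A direct computation from $W=\sum_i(\log\lambda_i)^2$ via the standard isotropic formula $\sigma_i=(\lambda_i/J)\,\partial W/\partial\lambda_i$ (with $J=\sqrt{I_3}$) yields $\sigma_i=\log(a_i)/\sqrt{I_3}$. If both $\beta_1=0$ and $\beta_{-1}=0$ held at some $B\in\PSym(3)\setminus\Rp\cdot\id$, the representation $\sigmahat(B)=\beta_0\.\id+\beta_1\.B+\beta_{-1}\.B^{-1}$ would collapse to $\sigmahat(B)=\beta_0\.\id$, forcing $\sigma_1=\sigma_2=\sigma_3$, whence $\log a_1=\log a_2=\log a_3$ and therefore $a_1=a_2=a_3$, contradicting $B\notin\Rp\cdot\id$. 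Hence at every such $B$ at least one of $\beta_1>0$ or $\beta_{-1}<0$ holds, completing the verification of $\wetss$. The main obstacle is the SSLI itself, which is a nontrivial result taken as a black box from the cited literature; once SSLI is in hand, the rest of the proof assembles cleanly from the explicit stress representation.
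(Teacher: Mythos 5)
Your proof is correct, and while the first half coincides with the paper's argument, the second half takes a genuinely different route. Like the paper, you reduce $\wetss$ to the sign conditions of Proposition~\ref{lemma:hyperWETSS} and obtain the non-strict inequalities $\frac{\partial W}{\partial I_1}\geq 0$, $\frac{\partial W}{\partial I_2}\geq 0$ from the SSLI (Proposition~\ref{def:ssli}); your $\varepsilon$-perturbation of the characteristic polynomial just makes explicit the monotonicity step that the paper states more briefly, and both treatments tacitly assume the same regularity of the invariant representation at repeated eigenvalues. Where you diverge is the strictness. The paper works at a point with $\lambda_1=\lambda_2=a\neq b=\lambda_3$ and differentiates $W$ along the volume-preserving curve $B(t)=\diag(a(1+t),a,b/(1+t))$, obtaining $\tfrac12(\log a-\log b)\neq 0$ as a linear combination of $\frac{\partial W}{\partial I_1}$ and $\frac{\partial W}{\partial I_2}$, which contradicts both vanishing. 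You instead use the closed form of the principal Cauchy stresses for this energy, $\sigma_i=\log(a_i)/\sqrt{I_3}$ (equivalently $\sigmahat(B)=\log B/\sqrt{I_3}$): if $\beta_1=\beta_{-1}=0$ then $\sigmahat(B)=\beta_0\.\id$, hence $\log a_1=\log a_2=\log a_3$ and $B\in\Rp\cdot\id$, a contradiction. This is cleaner for the theorem at hand and avoids any derivative computation at the degenerate point; its only cost is that it leans on the explicit stress formula, whereas the paper's curve argument is the one that transfers directly to the more general Hencky-type energies of Theorem~\ref{theo:henckyTypeSemi}, where the principal stresses are no longer as transparent (note the paper's footnote that the invariant representation itself is not easily accessible — your argument sidesteps this because you only need the principal stresses, not $\frac{\partial W}{\partial I_k}$ explicitly).
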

\begin{proof}
	Let $\lambda_1$, $\lambda_2$, $\lambda_3\in\R^+$ denote the singular values of the deformation gradient $F$.
	
	The SSLI ensures the monotonicity of $\norm{\log B}^2$ with respect to the matrix invariants $I_1(B)$ and $I_2(B)$ or, equivalently, the monotonicity of $\norm{\log U}^2$ due to the equality
	\[
		\norm{\log B}^2=\sum_{i=1}^3\left[\log\left(\lambda_i^2\right)\right]^2=\sum_{i=1}^3\left[2\.\log(\lambda_i)\right]^2=4\.\sum_{i=1}^3\left[\log(\lambda_i)\right]^2=4\.\norm{\log U}^2\,.
	\]
	Therefore,\footnote{Note that, since the invariant representation of the quadratic Hencky energy is not easily accessible, it would be difficult to obtain the same result based on a direct calculation of the derivatives.}
	\begin{align}
		\frac{\partial W}{\partial I_1}(I_1,I_2,I_3)\geq 0\qquad\text{and}\qquad\frac{\partial W}{\partial I_2}(I_1,I_2,I_3)\geq 0\,.\label{help:2}
	\end{align}
	It remains to show that one of the above inequalities \eqref{help:2} is strict. The SSLI implies strictness of both inequalities in the case of distinct eigenvalues, so without loss of generality, let $\lambda_1=\lambda_2\colonequals a$ and $\lambda_3\colonequals b$ with $a\neq b$ and assume that
	\begin{equation}\label{eq:henckySingularToContradict}
		\frac{\partial W}{\partial I_1}(I_1,I_2,I_3)=\frac{\partial W}{\partial I_2}(I_1,I_2,I_3)=0\,.
	\end{equation}
	Then for
	\[
		B\col\R\to\PSym(3)\,,\quad B(t)=\diag\left(a\.(1+t), a , \frac{b}{1+t}\right) \qquad\text{and}\qquad U(t)=\sqrt{B(t)}\,,
	\]
	we find
	\[
		I_1(B(t)) = a\.(1+t)+ a+\frac{b}{1+t}\,,\qquad I_2(B(t)) = a^2\.(1+t)+ab\.\frac{2+t}{1+t}\,,\qquad I_3(B(t))=a^2b
	\]
	as well as
	\begin{alignat*}{2}
		&W(U(t)) &= \frac14\.\norm{\log B(t)}^2&= \frac14\.\left(\log \left( a\,(1+t)\right)^2 +\left( \log a\right)^2 + \log\left( \frac b{1+t}\right)^2\right)\\
		&&&= \frac14\.\left( \left(\log a + \log (1+t)\right)^2 + (\log a)^2 + \left(\log b - \log (1+t)\right)^2 \right)\\
		&&&= \frac12\,(\log a)^2 + \frac14(\log b)^2 + \frac12(\log a - \log b)\log(1+t) + \frac12\left(\log (1+t)\right)^2\,,\\[.7em]
		\ddt &\mathrlap{W(B(t)) = \frac12\.(\log a - \log b)\.\frac 1{1+t} + \log(1+t)\.\frac 1{1+t}\,.}&&
	\end{alignat*}
	But then, since $\ddt I_3(B(t))=0$,
	\[
		0 \neq \frac12\.(\log a - \log b) = \ddt W(U(t))\big|_{t=0} = \frac{\partial W}{\partial I_1}(B(0))\cdot (a-b) + \frac{\partial W}{\partial I_2}(B(0))\cdot (a^2-ab)\,,
	\]
	contradicting \eqref{eq:henckySingularToContradict}.
\end{proof}
The results of Theorem \ref{theo:logUSemi} can be extended to more general classes of hyperelastic stress responses. First, we consider energy functions \emph{in terms of} the classical logarithmic strain $\norm{\log U}^2$, which exhibit a number of interesting properties \cite{agn_neff2015exponentiatedI}.
\begin{corollary}
Let $f\colon\R_+\to\R$ be differentiable with $f'>0$. Then the Cauchy stress response induced by the energy function
\begin{equation}
	W\colon\GLp(3)\to\R\,,\quad W(U) = f\bigl(\norm{\log U}^2\bigr)\,,
\end{equation}
where $U=\sqrt{F^TF}\in\Symp(3)$ denotes the right Biot stretch tensor, satisfies the weak empirical inequalities.
\end{corollary}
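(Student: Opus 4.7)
The plan is to reduce the claim to Theorem \ref{theo:logUSemi} by a simple chain rule argument. Since $\|\log U\|^2$ is an isotropic scalar invariant of $B=U^2$, it admits a representation $\|\log U\|^2 = g(I_1,I_2,I_3)$ in terms of the principal invariants of $B$, and hence $W = f\circ g$ as a function of $(I_1,I_2,I_3)$. By Proposition \ref{lemma:hyperWETSS}, verifying $\wetss$ amounts to showing that
\[
	\frac{\partial W}{\partial I_1} \geq 0 \qquad\text{and}\qquad \frac{\partial W}{\partial I_2} \geq 0
\]
on $\PSym(3)\setminus\R^+\cdot\id$, with at least one inequality strict at each such $B$.

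First I would apply the chain rule to obtain
\[
	\frac{\partial W}{\partial I_k} = f'\bigl(\|\log U\|^2\bigr)\cdot\frac{\partial g}{\partial I_k}(I_1,I_2,I_3)\qquad\text{for }k\in\{1,2\}.
\]
Because $f'>0$ by assumption, the sign of $\partial W/\partial I_k$ equals the sign of $\partial g/\partial I_k$. Now, the proof of Theorem \ref{theo:logUSemi} established (via the sum of squared logarithms inequality in Proposition \ref{def:ssli}) exactly that $\partial g/\partial I_k \geq 0$ for $k=1,2$ everywhere, and that on $\PSym(3)\setminus\R^+\cdot\id$ at least one of these partial derivatives is strictly positive. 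Multiplying by the positive scalar $f'(\|\log U\|^2)$ preserves both the non-strict inequalities and the strictness of the non-vanishing one. Applying Proposition \ref{lemma:hyperWETSS} then yields $\wetss$.

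I do not expect any genuine obstacle: the only subtlety is to recall that the argument of Theorem \ref{theo:logUSemi} already gave the invariant-derivative inequalities (not merely some derived consequence), so the chain rule transports them directly. In particular, the delicate case of coinciding singular values $\lambda_1=\lambda_2\neq\lambda_3$ was already handled in the proof of Theorem \ref{theo:logUSemi} by an explicit one-parameter deformation, and that argument is inherited here without modification because $f'>0$ guarantees that the vanishing of both partials of $W$ is equivalent to the vanishing of both partials of $g$.
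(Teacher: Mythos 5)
Your proposal is correct and follows essentially the same route as the paper: apply the chain rule to $W=f\circ g$ with $g=\norm{\log U}^2=\tfrac14\norm{\log B}^2$, use $f'>0$ to transfer the sign of $\partial g/\partial I_k$ to $\partial W/\partial I_k$ for $k\in\{1,2\}$, and inherit the non-negativity plus the strictness of at least one derivative on $\PSym(3)\setminus\R^+\cdot\id$ from the proof of Theorem \ref{theo:logUSemi}. No gaps.
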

\begin{proof}
For $k\in\{1,2\}$ and $U\in\PSym(3)\setminus \R\cdot\id$,
\[
	\dd{I_k}f\bigl(\norm{\log U}^2\bigr) = \dd{I_k}f\Bigl(\frac 14\.\norm{\log B}^2\Bigr) = \frac 14\,\underbrace{f'\Bigl(\frac 14\norm{\log B}^2\Bigr)}_{>0}\,\cdot\,\dd{I_k}\.\norm{\log B}^2 \geq 0\,,
\]
where equality cannot hold for both $k=1$ and $k=2$ as shown in the proof of Theorem \ref{theo:logUSemi}.
\end{proof}
Theorem \ref{theo:logUSemi} can be further generalized to so-called \emph{Hencky-type} energy functions of the form
\begin{equation}\label{eq:henckyType}
	W(F) = \mathcal W\bigl(\norm{\dev_3\log U}^2,\ \dynabs{\tr\log U}^2\bigr)\,.
\end{equation}
\begin{theorem}\label{theo:henckyTypeSemi}
	Let $W\colon \GLp(3)\to\R$ be an elastic energy function of the form \eqref{eq:henckyType} with positive partial derivative with respect to $\norm{\dev_3\log U}^2$, i.e.
	\begin{equation}
		\frac{\partial \mathcal W}{\partial x_1}\bigl(\norm{\dev_3\log U}^2,\ \dynabs{\tr\log U}^2\bigr) > 0\qquad\text{for all }\;U\in\PSym(3)\,.
	\end{equation}
	Then the Cauchy stress response induced by $W$ satisfies the weak empirical inequalities.
\end{theorem}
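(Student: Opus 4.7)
The plan is to reduce the statement to Theorem \ref{theo:logUSemi} via Proposition \ref{lemma:hyperWETSS} by showing that the second argument of $\mathcal W$ contributes nothing to the derivatives with respect to $I_1$ and $I_2$. The starting observation is that
\[
\tr\log U \;=\; \log\det U \;=\; \tfrac{1}{2}\log\det B \;=\; \tfrac{1}{2}\log I_3\,,
\]
so $\dynabs{\tr\log U}^2 = \tfrac{1}{4}(\log I_3)^2$ is a function of $I_3$ alone.

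Next, I would rewrite the deviatoric norm as
\[
\norm{\dev_3\log U}^2 \;=\; \norm{\log U}^2 \;-\; \tfrac{1}{3}(\tr\log U)^2 \;=\; \tfrac{1}{4}\norm{\log B}^2 \;-\; \tfrac{1}{12}(\log I_3)^2\,,
\]
so that for $k\in\{1,2\}$, with $I_3$ held fixed, the partial derivative $\partial_{I_k}\norm{\dev_3\log U}^2$ agrees with $\tfrac{1}{4}\,\partial_{I_k}\norm{\log B}^2$. By the chain rule applied to $W = \mathcal W\bigl(\norm{\dev_3\log U}^2,\dynabs{\tr\log U}^2\bigr)$, the $x_2$-term vanishes when differentiating with respect to $I_1$ or $I_2$, leaving
\[
\frac{\partial W}{\partial I_k} \;=\; \underbrace{\frac{\partial \mathcal W}{\partial x_1}}_{>\,0}\,\cdot\,\frac{1}{4}\,\frac{\partial}{\partial I_k}\norm{\log B}^2\qquad\text{for }k\in\{1,2\}\,.
\]

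Now I would invoke the work already done in the proof of Theorem \ref{theo:logUSemi}: the sum of squared logarithms inequality (Proposition \ref{def:ssli}) established that $\partial_{I_k}\norm{\log B}^2 \geq 0$ for $k=1,2$ on all of $\PSym(3)$, and the separate argument handling the case of a double eigenvalue showed that for any $B\in\PSym(3)\setminus\R^+\cdot\id$ at least one of these two partial derivatives is strictly positive. Combined with $\partial_{x_1}\mathcal W > 0$, this yields $\partial W/\partial I_1,\partial W/\partial I_2 \geq 0$ with one strict inequality for every $B\notin\R^+\cdot\id$.

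Applying Proposition \ref{lemma:hyperWETSS} then gives the weak empirical inequalities. There is no real obstacle here: the only thing to check carefully is the clean separation of the two arguments of $\mathcal W$ with respect to the invariants $I_1,I_2,I_3$, which is immediate from $\tr\log U = \tfrac12\log I_3$. All of the analytic content — in particular the use of the SSLI and the subtle handling of coinciding singular values — has already been absorbed into Theorem \ref{theo:logUSemi}.
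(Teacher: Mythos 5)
Your proposal is correct and follows essentially the same route as the paper's proof: the same decomposition $\norm{\dev_3\log U}^2 = \tfrac14\norm{\log B}^2 - \tfrac1{12}(\log I_3)^2$, the same observation that the $\abs{\tr\log U}^2$-term depends only on $I_3$ and hence drops out when differentiating with respect to $I_1,I_2$, and the same appeal to the SSLI-based argument from Theorem \ref{theo:logUSemi} for nonnegativity and strictness of $\partial_{I_k}\norm{\log B}^2$. Your explicit invocation of Proposition \ref{lemma:hyperWETSS} at the end merely makes precise a step the paper leaves implicit.
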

\begin{proof}
	Recall that $\norm{\log U} = \frac 12\norm{\log B}$ and $\norm{\log B}^2=\norm{\dev_n\log B}^2 + \frac 1n\abs{\tr\log B}^2$. We compute
	\begin{align*}
		\frac{\mathrm d\mathcal W}{\mathrm dI_k}\bigl(\norm{\dev_3\log U}^2,\ \dynabs{\tr\log U}^2\bigr)&=\frac{\partial\mathcal W}{\partial x_1}\bigl(\norm{\dev_3\log U}^2,\ \dynabs{\tr\log U}^2\bigr)\cdot\frac{\mathrm d}{\mathrm dI_k}\norm{\dev_3\log U}^2\\
		&\phantom{=}\;+\frac{\partial\mathcal W}{\partial x_2}\bigl(\norm{\dev_3\log U}^2,\ \dynabs{\tr\log U}^2\bigr)\cdot\frac{\mathrm d}{\mathrm dI_k}\abs{\tr\log U}^2\\[.5em]
		&=\frac 14\.\frac{\partial\mathcal W}{\partial x_1}\bigl(\norm{\dev_3\log U}^2,\ \dynabs{\tr\log U}^2\bigr)\cdot\frac{\mathrm d}{\mathrm dI_k}\bigl(\norm{\log B}^2-\frac 13\abs{\tr\log B}^2\bigr)\\
		&\phantom{=}\;+ \frac 14\.\frac{\partial\mathcal W}{\partial x_2}\bigl(\norm{\dev_3\log U}^2,\ \dynabs{\tr\log U}^2\bigr)\cdot\frac{\mathrm d}{\mathrm dI_k}\abs{\tr\log B}^2\\[.5em]
		&=\frac 14\cdot\frac{\partial\mathcal W}{\partial x_1}\bigl(\norm{\dev_3\log U}^2,\ \dynabs{\tr\log U}^2\bigr)\cdot\frac{\mathrm d}{\mathrm dI_k}\norm{\log B}^2\\
		&\phantom{=}\;+\frac 14\.\left(-\frac 13\.\frac{\partial\mathcal W}{\partial x_1}+\frac{\partial\mathcal W}{\partial x_2}\right)\bigl(\norm{\dev_3\log U}^2,\ \dynabs{\tr\log U}^2\bigr)\cdot\frac{\mathrm d}{\mathrm dI_k}\abs{\tr\log B}^2\,.
	\end{align*}
	 For $k\in\{1,2\}$, we find $\frac{\mathrm d}{\mathrm dI_k}\abs{\tr\log B}^2=\frac{\mathrm d}{\mathrm dI_k}\abs{\log I_3}^2=0$. Therefore,
	\[
		\frac{\mathrm d\mathcal W}{\mathrm dI_k}(U)\ =\ \underbrace{\frac{\partial\mathcal W}{\partial x_1}\bigl(\norm{\dev_3\log U}^2,\ \dynabs{\tr\log U}^2\bigr)}_{>0}\,\cdot\,\frac{\mathrm d}{\mathrm dI_k}\norm{\log B}^2\ \geq\ 0\qquad\text{for }\;k\in\{1,2\}\,;
	\]
	as in the proof of Theorem \ref{theo:logUSemi}, the SSLI shows that $\frac{\mathrm d}{\mathrm dI_k}\norm{\log B}^2 \geq 0$ for $k\in\{1,2\}$ with at least one of the two inequalities being strict, which concludes the proof.
\end{proof}
The most well-known example of Hencky-type energy functions is the classical \emph{quadratic Hencky energy} \cite{Hencky1928,Hencky1929,agn_neff2013hencky,agn_neff2015geometry} $\WH\colon\GLp(3)\to\R$ with
\begin{equation}
	\WH(F)\ =\ \mu\,\norm{\dev_3 \log U}^2 + \frac \kappa 2\left(\tr(\log U)\right)^2\ =\ \mu\,\norm{\log U}^2 + \frac \Lambda 2\left(\tr(\log U)\right)^2\,,
\end{equation}
where $U=\sqrt{F^TF}$. However, a number of alternative energy potentials based on logarithmic strain measures have been suggested as a model for large elastic deformations, including the recently introduced \emph{exponential Hencky energy} \cite{agn_neff2015exponentiatedI,agn_neff2015exponentiatedII,nedjar2018finite} $\WeH\colon\GLp(3)\to\R$ with
\begin{equation}
	\WeH(F)\ =\ \frac \mu k\,e^{k\,\norm{\dev_3\log U}^2} + \frac \kappa{2\widehat k}\,e^{\widehat k\,\abs{\tr \log U}^2}\,,\qquad \mu,k,\kappa,\widehat k\in\R_+\,.
\end{equation}
Due to Theorem \ref{theo:henckyTypeSemi}, the elastic laws induced by both $\WH$ and $\WeH$ satisfy the weak empirical inequalities and are therefore semi-invertible. Unlike $\WH$, however, it can be shown that $\WeH$ even induces an invertible (in the classical sense) Cauchy stress response \cite{agn_neff2015exponentiatedI}.

For additional examples of hyperelastic energy functions which satisfy $\wetss$ but not $\etss$, see \cite[p.138]{agn_thiel2017neue}.

\newpage
%
%
%
\footnotesize
\section{References}
\printbibliography[heading=none]
%
%
%
%
\begin{appendix}
%
%
%
%
\section{Uniaxial tension and the Baker-Ericksen inequalities}\label{appendix:marzano}
In an 1983 article, Marzano \cite{marzano1983interpretation} showed that for every constitutive law of elasticity satisfying the Baker-Ericksen inequalities, a uniaxial tension Cauchy stress tensor of the form $\sigma=\diag(s,0,0)$ can only be caused by a simple stretch of the form $V-\id=\diag(\alpha,0,0)$, where $V=\sqrt{FF^T}$ denotes the left Biot stretch tensor. Marzano also remarked that \enquote{an isotropic elastic material satisfies the B-E inequalities if and only if a simple tension in the direction of $e_3$ produces a simple extension in the same direction, with the ratio of the lateral contraction to the longitudinal strain positive and less than 1} \cite[p.~234]{marzano1983interpretation}.

However, this statement, interpreted literally, is not true in general: While Marzano indeed shows that the implication $\lambda_i>\lambda_j \implies \sigma_i>\sigma_j$ holds \emph{if $\lambda_1=1+\alpha$ and $\lambda_2=\lambda_3=0$}, he does not establish the Baker-Ericksen inequalities \enquote{globally}, i.e.\ for all $\lambda_1,\lambda_2,\lambda_3\in\Rp$. As a counterexample, consider the mapping
\[
	V\mapsto\sigma(V)\ =\ \bigl(1-h(\lambda_1,\lambda_2,\lambda_3)\bigr)\. V - \id
\]
with the symmetric function $h\colon\R_+^3\to\R$ depending on the eigenvalues of $V$ with
\[
	h(\lambda_1,\lambda_2,\lambda_3)\ =\ (\lambda_1-\lambda_2)^2(\lambda_1-\lambda_3)^2(\lambda_2-\lambda_3)^2\,,
\]
which is zero if and only if two eigenvalues are equal. Then
\[
\sigma(V)\ =\ \matr{s&0&0\\0&0&0\\0&0&0}\qquad\iff\qquad \begin{cases}\bigl(1-h(\lambda_1,\lambda_2,\lambda_3)\bigr)\,\lambda_1-1&=\ s\,,\\[.5em]\bigl(1-h(\lambda_1,\lambda_2,\lambda_3)\bigr)\,\lambda_2-1&=\ 0\,,\\[.5em]\bigl(1-h(\lambda_1,\lambda_2,\lambda_3)\bigr)\,\lambda_3-1&=\ 0\,,\end{cases}
\]
which implies that $\lambda_2=\lambda_3$ and thus $h(\lambda_1,\lambda_2,\lambda_3)=0$, i.e.\ $\lambda_1=1+s$, $\lambda_2=\lambda_3=1$ and hence $V=\diag(1+s,1,1)$. Therefore, uniaxial tension can only be induced by a simple stretch. However, $\sigma$ does not satisfy the Baker-Ericksen inequalites; for example, if $V=\diag(3,2,1)$, then $h(3,2,1)=2$ and thus $\sigma\bigl(\diag(3,2,1)\bigr)=-\diag(3,2,1)-\id=\diag(-4,-3,-2)$. In this case, $\lambda_1=3>\lambda_2=2$ and $\sigma_1=-4<\sigma_2=-3$, contradicting (BE).
\end{appendix}
\end{document}